\documentclass[12pt]{iopart}
\usepackage{iopams}  

\usepackage{graphicx}

\expandafter\let\csname equation*\endcsname\relax

\expandafter\let\csname endequation*\endcsname\relax

\usepackage{amsmath}
\usepackage{amstext,amsthm}

\newcommand{\be}{\begin{equation}}
\newcommand{\ee}{\end{equation}}

\def\R{{\mathbb R}}
\def\C{{\mathbb C}}
\def\N{{\mathbb N}}

\def\E{{\mathbb E}}

\newcommand{\gl}{\lambda}
\newcommand{\glN}{\lambda^{(N)}}

\newcommand{\PP}{{\mathcal{P}}}
\newcommand{\Beta}{{\mathrm{B}}}
\newcommand{\Unif}{{\mathrm{U}}}

\newtheorem{thm}{Theorem}

\newtheorem{cor}{Corollary}
\newtheorem{prop}{Proposition}
\newtheorem{dfn}{Definition}
\newtheorem{claim}{Claim}
\newtheorem{lem}{Lemma}
\newtheorem{ex}{Example}
\theoremstyle{definition}

\theoremstyle{remark}
\newtheorem{rem}{Remark}

\newcommand{\staircase}{
\raisebox{-1pt}{\setlength{\unitlength}{0.12in}
\!\begin{picture}(1,1)(0,0)
\put(0,0){\line(1,0){0.33}}
\put(0.33,0){\line(0,1){1}}
\put(0.,0.33){\line(1,0){0.66}}
\put(0.66,0.33){\line(0,1){0.66}}
\put(0,0.66){\line(1,0){1}}
\put(1,0.66){\line(0,1){0.33}}
\put(0,0){\line(0,1){1}}
\put(0,1){\line(1,0){1}}
\end{picture}} 
}


\newcommand{\boxp}{
\raisebox{-1pt}{\setlength{\unitlength}{0.12in}
\!\begin{picture}(1,1)(0,0)
\put(0,0){\line(1,0){1}}
\put(0,0){\line(0,1){1}}
\put(0.33,0){\line(0,1){1}}
\put(0.66,0){\line(0,1){1}}
\put(0,1){\line(1,0){1}}
\put(0,0.33){\line(1,0){1}}
\put(0,0.66){\line(1,0){1}}
\put(1,0){\line(0,0){1}}
\end{picture}}\, 
}

\begin{document}

\title[Random matrices associated to Young diagrams]{Random matrices associated to Young diagrams}

\author{Fabio Deelan Cunden$^{1,2}$, Marilena Ligab\`o$^{1}$,  Tommaso Monni$^{1}$}
\address{1. Dipartimento di Matematica, 
 Universit\`a degli Studi di Bari, 
 I-70125 Bari, 
  Italy\\
  2. INFN, Sezione di Bari, I-70125 Bari, Italy}
\ead{fabio.cunden@uniba.it}
\ead{marilena.ligabo@uniba.it}
\ead{tommaso.monni@uniba.it}
\vspace{10pt}

\begin{abstract}
We consider the singular values of certain Young diagram shaped random matrices. 
For block-shaped random matrices,  the empirical distribution of the squares of the singular eigenvalues converges almost surely to a distribution whose moments are a generalisation of the Catalan numbers. The limiting distribution is the density of a product of rescaled independent Beta random variables and its Stieltjes-Cauchy transform has a hypergeometric representation. In special cases we recover the Marchenko-Pastur and Dykema-Haagerup measures of square and triangular random matrices, respectively. We find a further factorisation of the moments in terms of two complex-valued random variables that generalises the factorisation of the Marcenko-Pastur law as product of independent uniform and arcsine random variables. 
\end{abstract}

%
%
%
%
%

\section{Introduction}

Let $X$ be a matrix whose entries are i.i.d.\ complex random variables. The nonnegative definite matrix   $X X^*$
is known as \emph{random covariance matrix}, and it is arguably one of the most studied models in random matrix theory with varied applications in physics, statistics and other areas~\cite{Forrester10,Livan18,Muirhead09}. 
\par

In this paper we consider a class of random matrices, that we dub \emph{$\lambda$-shaped random matrices}.  They can be thought of as a generalisation of random covariance matrices where $X$ has the `shape' of a Young diagram. Instances of these matrix models in the special case of Gaussian entries were studied by Dykema and Haagerup~\cite{Dykema04} as a tool to construct certain non commutative random variables called \emph{DT-elements}, and in the work of F\'eray and Sniady~\cite{Feray11} in relation to \emph{Stanley's character formula}~\cite{Stanley04} of the symmetric group. Under various names and in disguised form, Gaussian $\lambda$-shaped matrices recently resurfaced in connection to biorthogonal ensembles, last passage percolation and free probability~\cite{Adler13,Cheliotis18,Collins14,Forrester17,Nakashima20}.
\par

The main result of this paper (Theorem~\ref{thm:main}) is a limit theorem for the empirical distribution of the eigenvalues of block-shaped random matrices. We also obtain various properties of the limiting distribution (Proposition~\ref{prop:G_Beta} and Corollary~\ref{cor:supp}).

\section{Partitions and Young diagrams}

We begin by reviewing some of the basic terminology of integer partitons and Young diagrams. A standard reference is~\cite[Ch. I]{Macdonald}.

\par

A \emph{partition} is any (finite or infinite) sequence $\gl=(\lambda_1,\lambda_2,\ldots,\lambda_m,\ldots)$ of weakly decreasing nonnegative integers, $\lambda_1\geq\lambda_2\geq\cdots\geq\lambda_m\geq\cdots$. It is convenient not to distinguish between two such sequences which differ only by a string of zeros at the end. Thus, for example, we regard $(5,4,4,1)$, $(5,4,4,1,0)$, $(5,4,4,1,0,0,\ldots)$ as the same partition.  We denote the set of partitions by $\mathcal{P}$. The number of nonzero elements $\lambda_i$ of a partition $\lambda$ is called \emph{number of parts} or \emph{length} of $\lambda$, denoted by $\ell(\lambda)$; the sum of the parts is the \emph{weight} of $\lambda$, denoted by $|\lambda|=\lambda_1+\lambda_2+\cdots$. If $|\gl|=n$, we say that $\gl$ is a partition of $n$ and we write $\lambda\vdash n$. 
\par

The \emph{Young diagram} of a partition $\lambda$ is the set of points $\{(i,j)\in \N^2\colon1\leq j\leq \lambda_i\}$. A Young diagram is drawn usually as a set of boxes, not of points.  In drawing such diagrams we shall adopt the \emph{English convention}, as with matrices, that the first coordinate $i$ (the row index)  increases as one goes downwards, and the second coordinate $j$ (the column index) increases from left to right. With this convention the diagram can be visualised as a diagram of {left-justified} rows of boxes where the $i$-th row contains $\lambda_i$ boxes (hence each row is {not longer} than the row on top of it).  For instance, the diagram of $\gl = (5,4,4,1)\vdash 14$ is:
\begin{center}\setlength{\unitlength}{0.011in}\linethickness{0.0125in}
\begin{picture}(100,80)
\multiput(0,80)(0,-20){2}{\line(1,0){100}} %
\multiput(0,40)(0,-20){2}{\line(1,0){80}}%
\multiput(0,0)(0,-20){1}{\line(1,0){20}}%
\multiput(0,0)(20,0){2}{\line(0,1){80}}%
\multiput(40,20)(20,0){3}{\line(0,1){60}}%
\multiput(100,60)(20,0){1}{\line(0,1){20}}%
\end{picture}
\end{center}
It is customary to identify a partition $\lambda\vdash n$ and its Young diagram with $n$ boxes.
\par

For $\lambda,\mu\in\PP$, we shall write $\mu\subset \gl$ to mean that the diagram of $\gl$ contains the diagram of $\mu$, i.e. that $\mu_i\leq \gl_i$ for all $i\geq1$. We write $(i,j)\in\gl$ to mean that the box $(i,j)$ is in the diagram of $\gl$, i.e. $\gl_i\geq j$. The conjugate of a partition $\gl$ is the partition $\gl'$ whose diagram is the  transpose of the diagram $\gl$, i.e. the diagram obtained by reflection in the main diagonal. 
\par

It is possible to define an operation of left multiplication by positive integers on the set of Young diagrams. (In fact, it is possible to define a multiplication by positive real numbers using a natural identification between diagrams and their border path when drawing a partition in `Russian notation'~\cite{Dolega}.) 
If $N\in\mathbb{N}$ and $\lambda\in\PP$, then $N\lambda$ is the partition with $N\ell(\gl)$ parts given by
$$
(N\lambda)_i=N\lambda_j \quad \mathrm{iff}\quad jN \leq i<(j+1)N .
$$
In other words
$$
N\lambda=\left(\underbrace{N\lambda_1,\ldots,N\lambda_1}_{N\,\mathrm{times}},\underbrace{N\lambda_2,\ldots,N\lambda_2}_{N\,\mathrm{times}},\ldots\right).
$$
The Young diagram of $N\lambda$ is a \emph{dilation} of the Young diagram of $\lambda$ obtained  by replacing each box in $\lambda$ by a grid of $N\times N$ boxes. Hence, if $\lambda\vdash n$, then $N\lambda\vdash N^2 n$.   
For instance, if $\gl = (5,4,4,1)\vdash 14$, then  $3\lambda=(15,15,15,12,12,12,12,12,12,3,3,3)\vdash 3^2\cdot 14$. Its diagram is:
\begin{center}
\begin{picture}(300,200)\setlength{\unitlength}{0.01in}
\multiput(200,240)(0,-20){4}{\line(1,0){300}} %
\multiput(200,160)(0,-20){6}{\line(1,0){240}}%
\multiput(200,0)(0,20){3}{\line(1,0){60}}%
\multiput(200,0)(20,0){4}{\line(0,1){240}}%
\multiput(280,60)(20,0){9}{\line(0,1){180}}%
\multiput(460,180)(20,0){3}{\line(0,1){60}}%
\linethickness{0.0125in}
\multiput(0,180)(0,-20){2}{\line(1,0){100}} %
\put(-40,140){ \large{$3\,\,\cdot$}}%
\multiput(0,140)(0,-20){2}{\line(1,0){80}}%
\multiput(0,100)(0,-20){1}{\line(1,0){20}}%
\multiput(0,100)(20,0){2}{\line(0,1){80}}%
\multiput(40,120)(20,0){3}{\line(0,1){60}}%
\multiput(100,160)(20,0){1}{\line(0,1){20}}%
\put(140,130){ \large{$=$}}%
\multiput(200,240)(0,-60){2}{\line(1,0){300}} %
\multiput(200,60)(0,60){4}{\line(1,0){240}} %
\multiput(200,00)(0,0){1}{\line(1,0){60}} %
\multiput(200,00)(60,0){2}{\line(0,1){240}} %
\multiput(260,60)(60,0){4}{\line(0,1){180}} %
\multiput(500,180)(60,0){1}{\line(0,1){60}} %
\end{picture}
\end{center}

\par

We use the following pictorial notation. The square partition with $r$ parts all equal to $r$ is denoted $\boxp_r$. For the staircase partition $(r,r-1,\dots,2,1)$  we use the symbol $\staircase_r$.

 \section{$\lambda$-shaped random matrices}
Fix a field $K$, and a partition $\gl$. We denote by $M(K,\gl)$ the set of all $\ell(\gl)\times\ell(\gl')$ matrices $A=(a_{ij})$ over $K$ with entries $a_{ij}=0$ if $(i,j)\notin\lambda$. The set $M(K,\gl)$ is a $K$-vector space. These sets appeared in applied mathematics and computer science mostly when $K$ is a finite field. See~\cite{Ballico15} and references therein. We call matrices elements of $M(K,\gl)$,  \emph{$\gl$-shaped matrices}.
\par

In the present paper we will consider spectral properties of random matrices in $M(\C,\gl)$ defined as follows. 
Let $\left(\glN\right)_{N\geq1}$ a nested sequence in $\mathcal{P}$,
\be
\gl^{(1)}\subset \gl^{(2)} \subset \cdots\subset \lambda^{(N)}\subset\cdots.
\ee
\par

 Let $\{X_{ij}\colon i,j\in\mathbb{N}\}$ be i.i.d.\ complex random variables with zero mean $\E X_{ij}=0$ and second moment $\E |X_{ij}|^2=1$. For $N\in\N$ let  $X_N$ be the  $\glN$-shaped matrix whose $(i,j)$-th entry is  $X_{ij}$ if $(i,j)\in \gl^{(N)}$ and $0$ otherwise. 
\par
Set $\ell_N:=\ell(\glN)$ and  consider the $\ell_N\times \ell_N$ complex Hermitian matrix
\be
\label{eq:defW}
W_N:=\frac{1}{N} X_N X_N^*.
\ee
Denote the eigenvalues of $W_N$ by $x_1^{(N)}\leq x_2^{(N)}\leq\cdots\leq x_{\ell_N}^{(N)}$, 
and 
\be
\label{eq:defF}
F_N(x):=\frac{1}{\ell_N}\#\{j\leq \ell_N\colon x_j^{(N)}\leq x \}
\ee
the empirical distribution of the eigenvalues $x_i^{(N)}$ of $W_N$. It is natural to ask whether the sequence $(F_N)_{N\geq1}$ converges in distribution. The limit, when it exists, will be referred to as the \emph{limiting spectral distribution}  of $W_N$.

We begin by recalling two known cases. 

\subsection{Full matrices}
Classical Wishart matrices fit naturally in the setting of $\lambda$-shaped random matrices. Let $\gl^{(N)}=\boxp_N$. We have 
$$
\boxp_1\subset \boxp_2\subset\cdots\subset \boxp_N\subset\cdots
$$

Then, the $\boxp_N$-shaped random matrix $X_N$ is simply a $N\times N$ matrix with i.i.d. entries $X_{ij}$, so that $W_N$ is a Wishart matrix. Recall that the \emph{Catalan numbers} are 
\be
C_k = \frac{1}{k+1}{2k \choose k}.
\label{eq:Catalan}
\ee
Catalan numbers count hundreds of different combinatorial objects~\cite{Stanley15}, such as \emph{rooted plane trees} of $k+1$ vertices.
It is a classical result that $(F_N)_{N\ge1}$ converges in distribution to a deterministic distribution function  $F_{\rm MP}$ whose moments are the Catalan numbers
\begin{equation}
\int_\R x^k dF_{\rm MP}(x)=C_k.
\end{equation}
The limiting spectral distribution $F_{\rm MP}$ is called \emph{Marchenko-Pastur distribution}~\cite{Marchenko67}. It is supported on the interval $[0,4]$ with density
\be
F'_{\rm MP}(x)= \frac{1}{2\pi}\sqrt{\frac{4-x}{x}}\chi_{[0,4]}(x).
\label{eq:MP}
\ee

\subsection{Triangular matrices}
Let $\glN=\staircase_N$ be the staircase partition of length $N$.  Again $\staircase_N\subset \staircase_{N+1}$ for all $N$. 
Then, $X_N$ is a $N\times N$ \emph{triangular} random matrix (with 
entry $(i,j)=X_{ij}$ if $i+j\leq N$ and $0$ otherwise). These matrices where first considered by Dykema and Haagerup~\cite{Dykema04} who proved the existence of the limiting spectral distribution  $F_{\rm DH}$ 
whose moments are
\begin{equation}
\int_\R x^k dF_{\rm DH}(x)=\frac{1}{k+1}\frac{k^k}{k!}.
\label{eq:momentsDH}
\end{equation}
The \emph{Dykema-Haagerup distribution} $F_{\rm DH}$ comes from a density supported on the interval $[0,\rme]$ and defined by (see~\cite[Theorem 8.9]{Dykema04})
\be
F'_{\rm DH}\left(\frac{\sin v}{v} \exp(v \cot v)\right) = \frac{1}{\pi} \sin v\exp(-v\cot v), \quad 0<v<\pi.
\ee
This density can be also written in terms of Lambert function, see~\cite[Corollary 1]{Cheliotis18} (arXiv version)  and~\cite[Remark 3.9.]{Forrester17}.
\subsection{Balanced shapes}
We would like to study spectral properties of $\gl$-shaped random matrices for more general increasing sequence $\left(\gl^{(N)}\right)_{N\geq1}$ of Young diagrams. This amounts to understand the large-$N$ limit of \emph{moments}
\be
\lim_{N\to\infty}\frac{1}{\ell_N}\E\Tr W_N^k,\quad k=0,1,2,\dots,
\ee
where $W_N$ is defined is~\eref{eq:defW}.
We expect to find a nontrivial  limiting spectral distribution if the sequence $\glN$ `converges' to a limit shape (the `macroscopic shape' of $\glN$). The first moment ($k=1$) calculation can makes this a bit more precise,
\be
\frac{1}{\ell_N}\E\Tr W_N=\frac{1}{\ell_N }\frac{1}{N}\sum_{(i,j)\in\glN}\E|X_{ij}|^2
=\frac{\left|\glN\right|}{N \ell(\glN)}.
\ee 
Therefore, in order to have a nontrivial limit distribution one needs (at least) the length of the partitions to scale like the square root of the weight 
\be
N\ell_N \sim |\glN|,\quad \mathrm{as}\quad N\to\infty.
\label{eq:growth}
\ee
Young diagrams satisfying such a  \emph{growth condition} are  called \emph{balanced Young diagrams}~\cite{Dolega}. If we view a Young diagram as a geometric object, Eq.~\eref{eq:growth}  suggests to consider sequences that, after rescaling as $\frac{1}{N\ell_N }\glN$, tend to a limit shape $\lambda$.
\par
The easiest example of balanced Young diagrams is the sequence of  dilations of a fixed partition. Let $\gl\in\PP$ and consider the sequence $\glN=N\gl$. For such a sequence the ratio $\frac{\left|\glN\right|}{N \ell_N}=\frac{\left|\gl\right|}{\ell(\gl)}$ is constant. 
\par
\section{Block-shaped random matrices}
Fix a positive integer $r$, consider the  staircase partition $\staircase_r$ of length $r$. Define the sequence $\glN=N\staircase_r$. In this case $\ell_N=Nr$, and $\glN\subset\gl^{(N+1)}$ for all $N$. The matrix $X_N$ is a \emph{block-shaped random matrix} with ${r+1 \choose 2}$ nonzero blocks of size $N\times N$.
\begin{ex} For $r=3$, we have
\small{
\begin{gather*}
X_1=
\begin{pmatrix}
X_{11} &X_{12}  &X_{13}\\
X_{21} & X_{22} &0\\
X_{31} & 0&0
\end{pmatrix}
,\quad
X_2=
\begin{pmatrix}
X_{11} &X_{12}  &X_{13} &X_{14} &X_{15}  &X_{16}\\
X_{21} &X_{22}  &X_{23} &X_{24} &X_{25}  &X_{26}\\
X_{31} &X_{32}  &X_{33} &X_{34} &0 &0\\
X_{41} &X_{42}  &X_{43} &X_{44} &0 &0\\
X_{51} & X_{52} &0 &0 &0 &0\\
X_{61} & X_{62} &0 &0 &0 &0\\
\end{pmatrix},\\
X_3=
\begin{pmatrix}
X_{11} &X_{12}  &X_{13} &X_{14} &X_{15}  &X_{16}&X_{17} &X_{18}  &X_{19}\\
X_{21} &X_{22}  &X_{23} &X_{24} &X_{25}  &X_{26}&X_{27} &X_{28}  &X_{29}\\
X_{31} &X_{32}  &X_{33} &X_{34} &X_{35}  &X_{36}&X_{37} &X_{38}  &X_{39}\\
X_{41} &X_{42}  &X_{43} &X_{44} &X_{45}  &X_{46}&0&0  &0\\
X_{51} &X_{52}  &X_{53} &X_{54} &X_{55}  &X_{56}&0&0  &0\\
X_{61} &X_{62}  &X_{63} &X_{64} &X_{65}  &X_{66}&0 &0  &0\\
X_{71} &X_{72}  &X_{73} &0 &0  &0&0&0  &0\\
X_{81} &X_{82}  &X_{83} &0 &0  &0&0 &0  &0\\
X_{91} &X_{92}  &X_{93} &0 &0 &0&0 &0  &0
\end{pmatrix}, \quad \text{etc.}
\end{gather*}
}
\end{ex}
 Let $F_N$ be the empirical distribution~\eref{eq:defF} of the eigenvalues of $W_N$. 
\begin{thm} 
\label{thm:main} Let $\glN=N\staircase_r$. Then, the sequence $\left(F_N\right)_{N\geq1}$ converges, with probability $1$, to the deterministic distribution $F_{\langle r\rangle}$ with moments
\be
\int_\R x^k dF_{\langle r\rangle}(x)=\frac{1}{k+1}{(r+1)k \choose k}.
\ee
\end{thm}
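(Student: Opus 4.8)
The plan is to establish convergence via the method of moments, the standard route for such limiting spectral distribution results. Since the proposed limit has moments $\frac{1}{k+1}\binom{(r+1)k}{k}$ which grow like $((r+1)e)^k$ up to polynomial factors, Carleman's condition is satisfied, so moment convergence determines a unique distribution and implies convergence in distribution. I would prove two things: first, that the expected moments converge,
$$
\lim_{N\to\infty}\frac{1}{\ell_N}\E\Tr W_N^k=\frac{1}{k+1}\binom{(r+1)k}{k},
$$
and second, that the empirical moments concentrate almost surely around their means (so that the convergence holds with probability $1$, not merely in expectation).

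For the expected moments, I would expand
$$
\frac{1}{\ell_N}\E\Tr W_N^k=\frac{1}{\ell_N N^k}\sum \E\bigl(X_{i_1 j_1}\overline{X}_{i_2 j_1}X_{i_2 j_2}\overline{X}_{i_3 j_2}\cdots X_{i_k j_k}\overline{X}_{i_1 j_k}\bigr),
$$
where the sum runs over all row-indices $i_1,\dots,i_k$ and column-indices $j_1,\dots,j_k$ subject to the constraint that every factor $X_{i_a j_a}$ and $X_{i_{a+1} j_a}$ corresponds to a box inside $\glN=N\staircase_r$. Because the entries are i.i.d. with mean zero and unit variance, only terms in which each distinct random variable appears exactly twice survive to leading order; these pairings are encoded, as in the classical Wishart case, by \emph{rooted plane trees} on $k+1$ vertices (equivalently, noncrossing pair partitions / Dyck paths of length $2k$). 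The novelty here is the geometric constraint: each box $(i,j)$ must satisfy $\frac{i-1}{N}+\frac{j-1}{N}<r$ (up to boundary corrections), i.e. the rescaled coordinates $(x,y)=(i/(Nr),j/(Nr))$ must lie in the triangular limit shape $\{x+y\le 1,\ x,y\ge 0\}$. Passing to the limit converts each such tree-indexed sum into a Riemann integral over the configuration of rescaled coordinates attached to the tree vertices, constrained so that adjacent vertices have coordinate-sum at most $1$. I would therefore reduce the combinatorial sum to counting plane trees weighted by the volume of this constrained polytope.

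The technical heart is evaluating that volume factor and summing over trees to recover $\frac{1}{k+1}\binom{(r+1)k}{k}$. I expect the cleanest path is to recognize the right-hand side as a \emph{Fuss-Catalan number}: indeed $\frac{1}{k+1}\binom{(r+1)k}{k}$ is the order-$(r+1)$ Fuss-Catalan number, which counts $(r{+}1)$-ary trees (or equivalently enumerates lattice paths with appropriate step constraints). So I would aim to set up a bijection, or an equality of generating functions, between the tree-plus-volume data arising from the trace expansion and $(r{+}1)$-ary trees. Concretely, after the Riemann-sum limit each edge of the plane tree contributes an integration that, because of the staircase constraint, effectively splits each vertex into $r$ sub-levels; tracking this multiplicity should turn the plane-tree count into the Fuss-Catalan count. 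Verifying the known special cases gives a useful sanity check: $r=1$ gives $\frac{1}{k+1}\binom{2k}{k}=C_k$, recovering Marchenko-Pastur, and the Dykema-Haagerup moments $\frac{1}{k+1}\frac{k^k}{k!}$ should emerge in a suitable $r\to\infty$ scaling limit.

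The main obstacle, I anticipate, will be this final combinatorial identification: correctly bookkeeping the volume of the constrained coordinate polytope associated to each plane tree and showing that the total equals the Fuss-Catalan number. The pairing/tree structure and the Carleman-type uniqueness are routine, and the almost-sure strengthening follows from a standard fourth-moment (or variance) estimate showing $\mathrm{Var}\,\bigl(\frac{1}{\ell_N}\Tr W_N^k\bigr)=O(N^{-2})$ together with Borel-Cantelli. What requires genuine care is proving that the geometric weight coming from the triangular limit shape assembles, across all noncrossing pairings, into exactly $\frac{1}{k+1}\binom{(r+1)k}{k}$ — this is where the specific staircase geometry enters and where I would expect to spend most of the effort, likely via a generating-function argument for the Stieltjes transform (which the abstract hints has a hypergeometric form) rather than a purely bijective one.
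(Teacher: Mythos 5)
Your overall skeleton (moment method, Carleman/Riesz uniqueness, trace expansion, pairing terms indexed by plane trees, variance bound plus Borel--Cantelli, truncation for unbounded entries) is exactly the paper's. But the step you yourself identify as the technical heart goes wrong in two concrete ways. First, the constraint region is not the triangle $\{x+y\le 1\}$. A box $(i,j)$ lies in $N\staircase_r$ iff $\lceil i/N\rceil+\lceil j/N\rceil\le r+1$, which in rescaled coordinates converges to the \emph{staircase} region (a union of $\binom{r+1}{2}$ blocks), not to its inscribed triangle; the difference has area fraction $\tfrac{1}{2r}$, which does not vanish for fixed $r$. This is not a boundary correction: already the first moment is $|\glN|/(N\ell_N)=\tfrac{r+1}{2}$, whereas the triangle would give $\tfrac{r}{2}$. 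Carrying your continuous-polytope computation through with the triangle would produce (a rescaling of) the Dykema--Haagerup moments, which is the $r\to\infty$ limit, not the finite-$r$ answer. The correct exploitation of the geometry is discrete: write each index as $m=(b-1)N+p$ with block index $b\in[r]$ and inner index $p\in[N]$; the inner indices are unconstrained (contributing $N^{k+1}(1+o(1))$ per tree and colouring), and the membership constraint becomes purely a condition on the block indices, namely that adjacent vertices of the plane tree carry colours $c(u),c(v)\in[r]$ with $c(u)+c(v)\le r+1$.

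Second, your proposed combinatorial target is misidentified. The number of $(r{+}1)$-ary trees with $k$ internal vertices is the Fuss--Catalan number $\frac{1}{rk+1}\binom{(r+1)k}{k}$, which does not equal $\frac{1}{k+1}\binom{(r+1)k}{k}$ for $r>1$, so the bijection (or generating-function identity) you plan to set up with $(r{+}1)$-ary trees cannot exist as stated. The object that actually appears is the \emph{$r$-plane tree}: a plane tree on $k+1$ vertices with a colouring $c\colon V\to[r]$ satisfying $c(u)+c(v)\le r+1$ on every edge. These are enumerated by $\frac{r}{k+1}\binom{(r+1)k}{k}$ (Gu--Prodinger--Wagner), and dividing by the $r$ coming from the normalisation $\ell_N=rN$ gives exactly the claimed moments. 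With these two corrections your argument becomes the paper's proof; without them, the computation does not close.
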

\par
 
The limiting moments $m_k$ (multiplied by $r$) have a combinatorial interpretation. They enumerate plane trees whose vertices are given labels from the set $\{1, \ldots, r\}$ in
such a way that the sum of the labels along any edge is at most $r+1$. These combinatorial object were invented by Gu, Prodinger and Wagner~\cite{Gu10}, extending a previous definition by Gu and Prodinger~\cite{Gu09}. 
\begin{dfn}
A $r$-plane tree is a pair $(T,c)$, where $T=(V,E)$ is a plane tree, and $c\colon V\to \{1, \ldots, r\}$ is a colouring such that $c(u)+c(v)\leq r+1$ whenever $\{u,v\}\in E$. \end{dfn}
\begin{prop}[Gu, Prodinger and Wagner~\cite{Gu10}]
\label{prop:Gu}
 The number of $r$-plane trees on $k+1$ vertices is
\be
C^{\langle r\rangle}_k=\frac{r}{k+1}{(r+1)k \choose k}.
\ee
\end{prop}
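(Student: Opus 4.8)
The plan is to set up a generating function for colored plane trees and exploit the recursive structure of rooted plane trees to obtain a functional equation, from which the coefficients can be extracted by Lagrange inversion. For each color $i\in\{1,\dots,r\}$, let $F_i(x)$ be the generating function counting $r$-plane trees whose root has color $i$, with $x$ marking the number of vertices, and set $G(x)=\sum_{i=1}^r F_i(x)$. Since a rooted $r$-plane tree on $k+1$ vertices contributes $x^{k+1}$ and its root carries one of the $r$ colors, we have $[x^{k+1}]G=C^{\langle r\rangle}_k$, so the Proposition amounts to $[x^{k+1}]G=\frac{r}{k+1}\binom{(r+1)k}{k}$.

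First I would derive the defining system. A tree rooted at a vertex of color $i$ is that root (a factor $x$) followed by an \emph{ordered sequence} of subtrees, each rooted at a vertex of some admissible color $j\le r+1-i$. Translating the sequence construction into generating functions gives
\[
F_i=\frac{x}{1-\sum_{j=1}^{r+1-i}F_j},\qquad i=1,\dots,r.
\]
Writing $S_m=\sum_{j=1}^m F_j$ and $U_m=1-S_m$, so that $U_0=1$, $U_r=1-G$, and $F_i=U_{i-1}-U_i$, the system becomes $U_{i-1}-U_i=x/U_{r+1-i}$.

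The key step, which I expect to be the \emph{main obstacle}, is to uncover enough symmetry to collapse this $r$-dimensional system to a single scalar relation. Comparing the equation indexed by $i$ with the one indexed by $r+1-i$ (after clearing denominators so that both equal $x$) yields $U_{i-1}U_{r+1-i}=U_iU_{r-i}$, which shows that the product $P_i=U_iU_{r-i}$ is independent of $i$; evaluating at $i=0$ gives $P_i=U_0U_r=1-G$. Substituting $U_{r+1-i}=(1-G)/U_{i-1}$ back into the recursion gives $U_i=qU_{i-1}$ with common ratio $q=1-\frac{x}{1-G}$, hence $U_i=q^i$ and in particular $q^r=U_r=1-G$. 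Eliminating $G$ produces the parametric description
\[
x=q^r(1-q),\qquad G=1-q^r.
\]

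Finally I would extract coefficients by Lagrange inversion. Substituting $u=1-q$ (so $u\to0$ as $x\to0$) turns the above into $x=u(1-u)^r$ and $G=1-(1-u)^r$, i.e.\ $u=x\,\Phi(u)$ with $\Phi(u)=(1-u)^{-r}$ and $G'(u)=r(1-u)^{r-1}$. Lagrange inversion then gives
\[
[x^{k+1}]G=\frac{1}{k+1}[u^k]\,G'(u)\,\Phi(u)^{k+1}=\frac{r}{k+1}[u^k](1-u)^{-(rk+1)}=\frac{r}{k+1}\binom{(r+1)k}{k},
\]
where the last equality is the negative binomial expansion $[u^k](1-u)^{-(rk+1)}=\binom{rk+k}{k}$. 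This equals $C^{\langle r\rangle}_k$, which completes the proof; as a sanity check, the $r=1$ case recovers the ordinary Catalan generating function $G-G^2=x$.
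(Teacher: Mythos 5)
Your argument is correct, but note that the paper does not prove this proposition at all: it is quoted verbatim from Gu, Prodinger and Wagner~\cite{Gu10}, so there is no in-paper proof to compare against. Your derivation is a valid self-contained alternative to citing the literature. Checking the details: the system $F_i=x/(1-\sum_{j\le r+1-i}F_j)$ is the correct symbolic translation of the root-plus-ordered-sequence-of-subtrees decomposition with the edge constraint $c(u)+c(v)\le r+1$; the pairing of the equations indexed by $i$ and $r+1-i$ does give $U_{i-1}U_{r+1-i}=U_iU_{r-i}$, hence the constancy of $P_i=U_iU_{r-i}=1-G$, and this is indeed the crux that collapses the $r$ coupled equations to the geometric progression $U_i=q^i$ with $q^r=1-G$ and $x=q^r(1-q)$. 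The Lagrange inversion step with $u=1-q$, $\Phi(u)=(1-u)^{-r}$ and $H(u)=1-(1-u)^r$ correctly yields $[x^{k+1}]G=\frac{r}{k+1}\binom{(r+1)k}{k}$, and the uniqueness of the formal power series solution of $u=x(1-u)^{-r}$ with $u(0)=0$ justifies identifying this with the combinatorial generating function. One stylistic caveat: your chain of manipulations derives \emph{necessary} conditions on the $U_i$, so it is worth stating explicitly (as you implicitly do) that these conditions determine $G$ uniquely as a power series and that the combinatorial solution exists, which closes the logic. The cited source~\cite{Gu10} is primarily concerned with bijective proofs (to lattice paths and to $(r+1)$-ary trees), so your generating-function route trades combinatorial insight for a short, mechanical verification of the closed form.
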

For recent refined formulae see~\cite{Okoth22}. 
The integers $\left(C^{\langle r\rangle}_k\right)_{k\geq0}$ are \emph{generalised Catalan numbers}. 
Here are a few values of them.
$$
\begin{array}{c|rrrrrr}
k & C^{\langle 1\rangle}_k &C^{\langle 2\rangle}_k &C^{\langle 3\rangle}_k &C^{\langle 4\rangle}_k &C^{\langle 5\rangle}_k &C^{\langle 6\rangle}_k\\
0& 1 & 2 & 3 & 4 & 5 & 6 \\
1& 1 & 3 & 6 & 10 & 15 & 21 \\
2& 2 & 10 & 28 & 60 & 110 & 182 \\
3& 5 & 42 & 165 & 455 & 1020 & 1995 \\
4& 14 & 198 & 1092 & 3876 & 10626 & 24570 \\
5& 42 & 1001 & 7752 & 35420 & 118755 & 324632 \\
6& 132 & 5304 & 57684 & 339300 & 1391280 & 4496388 \\
7& 429 & 29070 & 444015 & 3362260 & 16861455 & 64425438 \\
8& 1430 & 163438 & 3506100 & 34179860 & 209638330 & 946996050 \\
9& 4862 & 937365 & 28242984 & 354465254 & 2658968130 & 14200613889 \\
10& 16796 & 5462730 & 231180144 & 3735373880 & 34270012530 & 216384285936 \\
\end{array}
$$
The sequences $\left(C^{\langle 1\rangle}_k\right)_{k\geq0}$ for $r=1,2,3,4$ are the entries A000108, A007226, A007228, and A124724 in The On-Line Encyclopedia of Integer Sequences~\cite{OEIS}.
\begin{rem} For $r=1$, the moments coincide with the Catalan sequence
\be
C^{\langle 1\rangle}_k=\frac{1}{k+1}{2k \choose k}.
\ee
\par

For $r=2$,  $X_N$ is a `three-blocks' matrix model studied by Flynn-Connolly~\cite{Flynn-Connolly18} who proved that the moments are $\frac{1}{k+1}{3k\choose k}$. The sequence entry A005132 in The On-Line Encyclopedia of Integer Sequences.

\par
For large $r$, we recover the moments~\eqref{eq:momentsDH} of the Dykema-Haagerup measure,
\be
\lim_{r\to\infty}\frac{1}{r^{k+1}}C^{\langle r\rangle}_k=\frac{1}{k+1}\frac{k^k}{k!}.
\ee
\end{rem}

We now present a few results on the limiting measure. They are motivated by the observation (by Ledoux~\cite{Ledoux04}) that a semicircular variable is equal in distribution to the product of the square root of a uniform random variable and an independent arcsine random variable. We denote by $\Unif(0,\ell)$ a random variable uniformly distributed in the interval $[0,\ell]$, and simply by $\Unif$ a random variable uniformly distribute in the unit interval $[0,1]$. With $\Beta(a,b)$ we generally denote a beta random variable with parameters $a,b>0$; it has support in the interval $[0,1]$, with density 
\be
\displaystyle\frac{\Gamma(a+b)}{\Gamma(a)\Gamma(b)}x^{a-1}(1-x)^{b-1},
\ee
where $\Gamma(\cdot)$ is the Euler Gamma function. Let $Y_{\rm MP}$ be a random variable with Marchenko-Pastur distribution~\eqref{eq:MP}. Then, $Y_{\rm MP}$ is equal in distribution to the product of a uniform random variable on the interval $[0,4]$ and an independent arcsine variable in the interval $[0,1]$. In formulae:
\be
Y_{\rm MP}\overset{\rm d}{=} \Unif(0,4)\Beta(1/2,1/2).
\label{eq:factorization_MP}
\ee
This is equivalent to the above mentioned factorisation of semicircular variables~\cite{Ledoux04}. Indeed, if $\Unif,\Unif'$ are independent and uniformly distributed on $[0,1]$, we can write 
\be
\E Y_{\rm MP}^k=  \E(\Unif A)^k,
\ee 
where $A\overset{\rm d}{=} \left(2\cos\pi \Unif'\right)^{2}$. The random variable $A$ is the rescaled squared projection of a uniform point on the unit semicircle, hence an arcsine random variable.
See~\cite{Cunden19} for a `semiclassical' interpretation for Gaussian random matrices.
\begin{prop} 
\label{prop:G_Beta}
Let $r\geq 1$. Set $L(r)= \dfrac{(r+1)^{r+1}}{r^r}>0$.
\begin{enumerate}
\item The Stieltjes-Cauchy transform of $F_{\langle r\rangle}$
\be
G_{\langle r\rangle}(z):=\int_{\R}\frac{1}{z-x}\rmd F_{\langle r\rangle}(x),
\ee
has the hypergeometric representation
\be
\label{eq:Hypergeometric}
G_{\langle r\rangle}(z)=
\frac{1}{r+1}\left(1-{}_{r}F_{r-1} \left[ \begin{matrix}
 -\frac{1}{r+1},-\frac{2}{r+1}, \ldots , -\frac{r}{r+1} \\ 
-\frac{1}{r} , -\frac{2}{r}, \ldots , -\frac{r-1}{r} \end{matrix} ;\frac{L(r)}{z} \right]\right)
\ee
\item 
If $Y_{\langle r \rangle}$ is a real random variable with distribution $F_{\langle r\rangle}$, then the following identity in distribution holds
\begin{align}
\label{eq:prod_Beta}
Y_{\langle r \rangle}&\overset{\rm d}{=}\Unif\left(0,L(r)\right)\prod_{j=1}^r\Beta\left(\frac{j}{r+1},\frac{j}{r(r+1)}\right),
\end{align}
where the variables on the right are jointly independent.  
\item Let $\Unif,\Unif'$ be independent and uniformly distributed on $[0,1]$. Then,
\be
\E Y_{\langle r \rangle}^k=\E\left(\Unif A_{\langle r\rangle}\right)^k.
\label{eq:moments_proj}
\ee
where $A_{\langle r\rangle}\overset{\rm d}{=} e^{(r-1)\pi \rmi \Unif'}\left(2\cos\pi  \Unif'\right)^{r+1}$.
\end{enumerate}
\end{prop}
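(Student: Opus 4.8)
The plan is to derive all three parts from the single moment formula $m_k:=\int_\R x^k\,\rmd F_{\langle r\rangle}(x)=\frac{1}{k+1}\binom{(r+1)k}{k}$ provided by Theorem~\ref{thm:main}. Since $F_{\langle r\rangle}$ is compactly supported on $[0,L(r)]$ (by Corollary~\ref{cor:supp}, a fact also visible from $\lim_k m_k^{1/k}=L(r)$ via Stirling), its Hausdorff moment problem is determinate, so for parts (i) and (ii) it suffices to match moments. The recurring technical device is the Gauss multiplication formula $(a)_{mn}=m^{mn}\prod_{j=0}^{m-1}\bigl(\tfrac{a+j}{m}\bigr)_n$ for Pochhammer symbols, which is exactly what reassembles products of shifted rising factorials into the factorials $((r+1)k)!$, $(rk)!$, $k!$ appearing in $m_k$, and which makes the normalisation $L(r)=(r+1)^{r+1}/r^r$ emerge.

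For part (i), I would expand the transform as the moment series $G_{\langle r\rangle}(z)=\sum_{k\geq0}m_k z^{-(k+1)}$, convergent for $|z|>L(r)$, and compare it termwise with the right-hand side. Writing $c_n$ for the $n$-th coefficient of the ${}_rF_{r-1}$ series, the identity to prove is $m_k=-\frac{1}{r+1}c_{k+1}L(r)^{k+1}$. Rather than resum, I would check the single base case $k=0$ (a short computation gives $c_1=-(r/(r+1))^r$, whence $-\frac{1}{r+1}c_1L(r)=1=m_0$) and then match ratios of consecutive terms. Using $(r+1)k+r+1=(r+1)(k+1)$ and $rk+r=r(k+1)$ to cancel a common factor $(k+1)$, a direct calculation gives
\[
\frac{m_{k+1}}{m_k}=\frac{L(r)}{k+2}\,\frac{\prod_{i=1}^{r}\left(k+\frac{i}{r+1}\right)}{\prod_{i=1}^{r-1}\left(k+\frac{i}{r}\right)} .
\]
The ratio $c_{k+2}L(r)^{k+2}/\bigl(c_{k+1}L(r)^{k+1}\bigr)$ yields the very same expression once the upper parameters $-j/(r+1)$ and lower parameters $-j/r$ are re-indexed by $i=r+1-j$ and $i=r-j$, so the two sequences coincide.

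For part (ii), I would compute the moments of the product and invoke determinacy. By independence, $\E\bigl(\Unif(0,L(r))\prod_{j=1}^r\Beta(\tfrac{j}{r+1},\tfrac{j}{r(r+1)})\bigr)^k=\frac{L(r)^k}{k+1}\prod_{j=1}^r\E\,\Beta(\tfrac{j}{r+1},\tfrac{j}{r(r+1)})^k$. The key algebraic simplification is that the two Beta parameters sum to $\tfrac{j}{r+1}+\tfrac{j}{r(r+1)}=\tfrac{j}{r}$, so $\E\,\Beta(\tfrac{j}{r+1},\tfrac{j}{r(r+1)})^k=(\tfrac{j}{r+1})_k/(\tfrac{j}{r})_k$. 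Applying the multiplication formula to $\prod_{j=1}^r(\tfrac{j}{r+1})_k$ (take $m=r+1$, after adjoining the trivial factor $(1)_k=k!$ at $j=r+1$) and to $\prod_{j=1}^r(\tfrac{j}{r})_k$ (take $m=r$) collapses them into $\frac{((r+1)k)!}{(r+1)^{(r+1)k}k!}$ and $\frac{(rk)!}{r^{rk}}$; the leftover factor $r^{rk}/(r+1)^{(r+1)k}=L(r)^{-k}$ cancels the prefactor $L(r)^k$, leaving exactly $\frac{1}{k+1}\binom{(r+1)k}{k}=m_k$. Matching moments of compactly supported laws gives the identity in distribution.

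For part (iii), which asserts only an equality of moments, I would use independence to write $\E(\Unif A_{\langle r\rangle})^k=\frac{1}{k+1}\E A_{\langle r\rangle}^k$, reducing to $\E A_{\langle r\rangle}^k=\binom{(r+1)k}{k}$. Substituting $\theta=\pi\Unif'$ and expanding by the binomial theorem,
\[
\E A_{\langle r\rangle}^k=\frac{1}{\pi}\int_0^\pi \rme^{(r-1)k\rmi\theta}\,(2\cos\theta)^{(r+1)k}\,\rmd\theta=\frac{1}{\pi}\sum_{l=0}^{(r+1)k}\binom{(r+1)k}{l}\int_0^\pi \rme^{2\rmi(rk-l)\theta}\,\rmd\theta .
\]
The decisive observation is that every frequency $2(rk-l)$ is even, so $\rme^{2\rmi(rk-l)\pi}=1$ and each half-period integral vanishes unless $l=rk$, where it equals $\pi$; only the central term survives, giving $\binom{(r+1)k}{rk}=\binom{(r+1)k}{k}$, and the parity automatically annihilates the imaginary part, which is precisely the role of the phase $\rme^{(r-1)\pi\rmi\Unif'}$. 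The main obstacle across the three parts is the careful bookkeeping in the multiplication-formula steps of (i) and (ii)—tracking how $L(r)$ is exactly the constant that makes the factorials reassemble into $\binom{(r+1)k}{k}$—while part (iii) is computationally lighter but hinges on recognising that the half-interval suffices only because the phase shifts all Fourier modes to even integers.
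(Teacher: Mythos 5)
Your proposal is correct and follows essentially the same route as the paper: term-by-term ratio matching for the hypergeometric representation, moment matching via the Beta/uniform moment formulas (using $\tfrac{j}{r+1}+\tfrac{j}{r(r+1)}=\tfrac{j}{r}$) for the product decomposition, and a coefficient-extraction/orthogonality argument for part (iii), which is just the paper's contour-integral identity read in reverse. The only caution is that you should rely on your Stirling remark rather than on Corollary~\ref{cor:supp} for compact support (the corollary is itself deduced from this proposition), but since you supply that independent justification the argument is complete.
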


\begin{rem}
For the first values of $r\geq1$ we get
\begin{align}
Y_{\langle 1\rangle}&\overset{\rm d}{=}\Unif\left(0,\frac{2^2}{1^1}\right)\Beta\left(\frac{1}{2},\frac{1}{2}\right),\label{eq:Y1}\\
Y_{\langle2\rangle}&\overset{\rm d}{=}\Unif\left(0,\frac{3^3}{2^2}\right)\Beta\left(\frac{1}{3},\frac{1}{2\cdot3}\right)\Beta\left(\frac{2}{3},\frac{2}{2\cdot3}\right),\label{eq:Y2} \\
Y_{\langle3\rangle}&\overset{\rm d}{=}\Unif\left(0,\frac{4^4}{3^3}\right)\Beta\left(\frac{1}{4},\frac{1}{3\cdot4}\right)\Beta\left(\frac{2}{4},\frac{2}{3\cdot4}\right)\Beta\left(\frac{3}{4},\frac{3}{3\cdot4}\right),
\end{align}
etc.
For $r=1$, Eq.~\eref{eq:Y1} coincides with the factorisation~\eqref{eq:factorization_MP}. 
For $r=2$, Eq.~\eqref{eq:Y2} is equivalent to a formula proved by M\l otkowski and Penson~\cite[Proposition 3.1]{Mlotkowski13}.

We can use the decomposition~\eref{eq:prod_Beta} to write `explicit' expressions for the densities $F'_r$. For small values of $r$ we have
\begin{align}
F'_{\langle 1\rangle}(x)&=\frac{1}{2\pi}\sqrt{\frac{L(1)-x}{x}}\chi_{[0,L(1)]}(x)\\
F'_{\langle 2\rangle}(x)&=\frac{1}{2^{3+1/3} 3^{1/2} \pi  w^{2/3}}\Bigl[\left(3^{1/2}+ 2 \sqrt{L(2)-w}\right)\left(3^{3/2}-2 \sqrt{L(2)-w}\right)^{1/3}\nonumber\\
&-\left(3^{1/2}-2 \sqrt{L(2)-w}\right) \left(3^{3/2}+2 \sqrt{L(2)-w}\right)^{1/3}\Bigr]\chi_{[0,L(2)]}(w).
  \label{eq:density_W'}
\end{align}
The first is the Marchenko-Pastur distribution $F'_{\rm MP}(x)$. 
The second is equivalent to formula in~\cite[Theorem 3.1]{Mlotkowski13} proved by Mellin inversion. 
For generic values of $r$,  a `direct' way to numerically compute $F'_{\langle r\rangle}(x)$ is by applying the Stieltjes inversion formula
\be
F_{\langle r\rangle}'(x)=-\frac{1}{\pi}\lim_{\epsilon\searrow0} \operatorname{Im}G_{\langle r\rangle}(x+\rmi \epsilon),
\ee
to Equation~\eref{eq:Hypergeometric}. This is how we made the numerical plots shown in Fig.~\ref{fig:densities}. 
\end{rem}
In fact, one could, as in~\cite{Penson11}, write $F_{\langle r\rangle}'(x)$ as the inverse Mellin transform of the moments. Since the moments are products of ratios of Pochhammer symbols, the resulting density would be a Mejier-G function (see the note by Dunkl~\cite{Dunkl13}). In fact, from the previous proposition we can get some precise information on the support of $F_{\langle r\rangle}'(x)$ and on its behavior at the edges.

 \begin{figure}[t]
	\centering
	\includegraphics[width=1\textwidth]{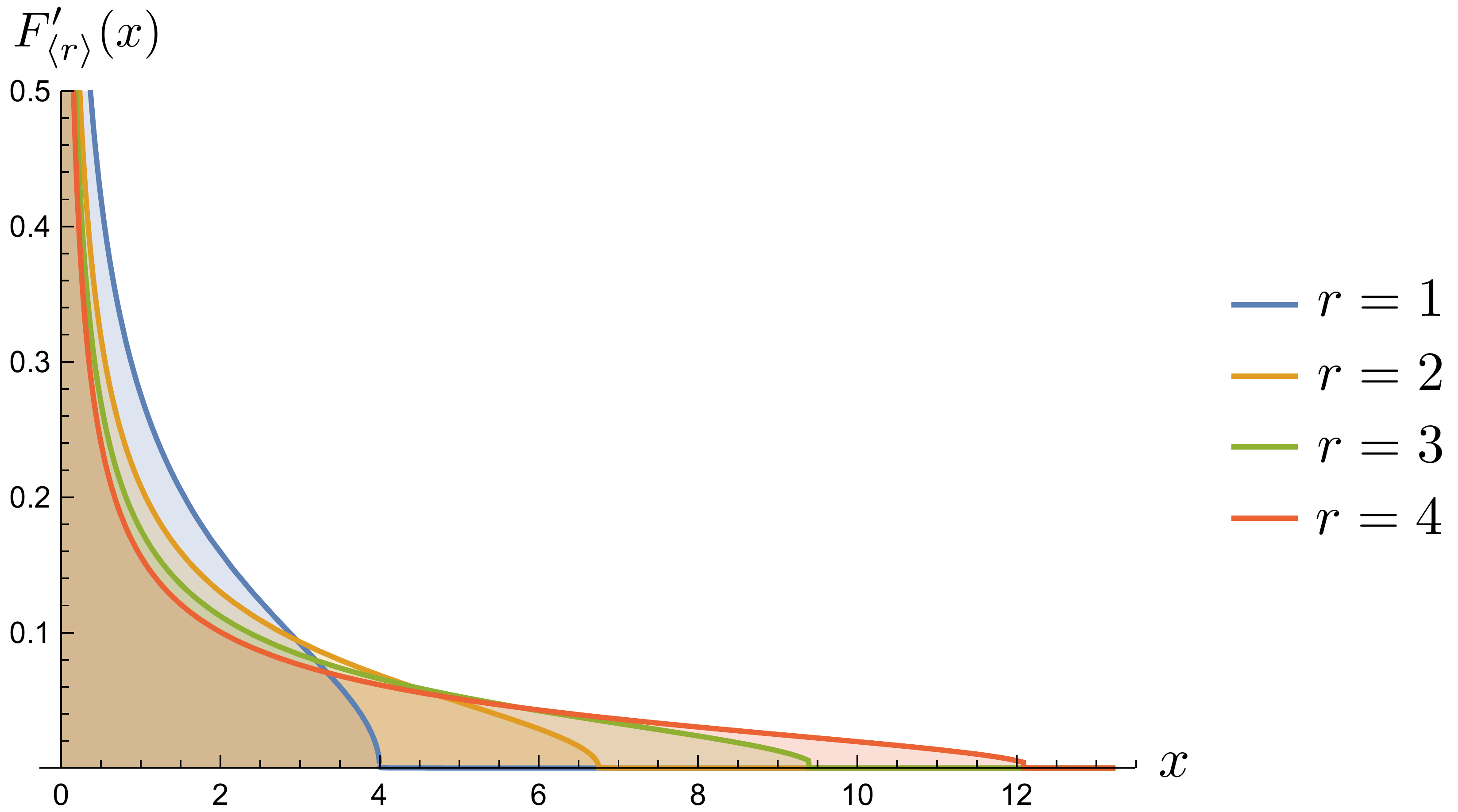}
\caption{Plot of the densities $F'_{\langle r\rangle}(x)$ for several values of $r$.}
\label{fig:densities}
\end{figure}

\begin{cor}
\label{cor:supp}
The measure $F_{\langle r\rangle}$ has a density, $F'_{\langle r\rangle}$ with support
\be
\mathrm{supp}F'_{\langle r\rangle}:=\overline{\left\{x\colon F'_{\langle r\rangle}(x)>0\right\}}=\left[0,L(r)\right].
\ee
Moreover at the edges of the support the density behaves as
\be
F'_{\langle r\rangle}(x)\sim 
\begin{cases}
\displaystyle c\, x^{-\frac{r}{r+1}},&\text{as}\quad x\searrow 0,\\\\
 \displaystyle c'\,\sqrt{L(r)-x},&\text{as}\quad x\nearrow L(r),
\end{cases}
\ee
for some constants $c, c'>0$.
\end{cor}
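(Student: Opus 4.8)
The plan is to read off all three assertions from the product representation~\eref{eq:prod_Beta} of Proposition~\ref{prop:G_Beta}. Write $Y_{\langle r\rangle}\overset{\rm d}{=} U\prod_{j=1}^r B_j$ with $U\sim\Unif(0,L(r))$ and $B_j\sim\Beta(a_j,b_j)$ jointly independent, where $a_j=\frac{j}{r+1}$ and $b_j=\frac{j}{r(r+1)}$. Note at the outset that $\sum_{j=1}^r b_j=\frac12$, a fact that will drive the edge exponents.

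First I would settle absolute continuity and the support. Each factor is absolutely continuous with density strictly positive on the interior of its support ($(0,L(r))$ for $U$ and $(0,1)$ for each $B_j$), and a product of finitely many independent absolutely continuous nonnegative variables is again absolutely continuous; hence $F_{\langle r\rangle}$ has a density. Passing to logarithms, $V:=-\log\bigl(Y_{\langle r\rangle}/L(r)\bigr)$ is a sum of $r+1$ independent nonnegative variables, each with density positive on $(0,\infty)$, and a convolution of densities positive on $[0,\infty)$ is positive on $(0,\infty)$; since $v\mapsto L(r)\rme^{-v}$ is a diffeomorphism of $(0,\infty)$ onto $(0,L(r))$, this gives $F'_{\langle r\rangle}>0$ on $(0,L(r))$. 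As the support of a product of independent variables is the closure of the product of their supports, $\mathrm{supp}\,F'_{\langle r\rangle}=\overline{[0,L(r)]\cdot[0,1]\cdots[0,1]}=[0,L(r)]$. (The endpoint $L(r)$ also appears as the branch point $L(r)/z=1$ of the ${}_rF_{r-1}$ in~\eref{eq:Hypergeometric}, a useful consistency check.)

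For the edge exponents I would use the Mellin transform, which factorises over the independent factors:
\[
\E Y_{\langle r\rangle}^{\,s-1}=\frac{L(r)^{s-1}}{s}\prod_{j=1}^r\frac{\Gamma(a_j+b_j)}{\Gamma(a_j)}\,\frac{\Gamma(a_j+s-1)}{\Gamma(a_j+b_j+s-1)} .
\]
This is a ratio of Gamma functions (so $F'_{\langle r\rangle}$ is a Meijer-$G$ function, as anticipated above), with fundamental strip $\operatorname{Re}s>\frac{r}{r+1}$ and exponential decay along vertical lines, and the density is recovered by Mellin inversion $F'_{\langle r\rangle}(x)=\frac{1}{2\pi\rmi}\int_{c-\rmi\infty}^{c+\rmi\infty}x^{-s}\,\E Y_{\langle r\rangle}^{\,s-1}\,\rmd s$. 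Shifting the contour to the left to capture the $x\searrow0$ behaviour, the first pole met is the rightmost pole of the numerator Gammas, namely $s_0=1-a_1=\frac{r}{r+1}$; it is simple because the remaining factors are finite and nonzero there. Its residue gives the leading term $c\,x^{-r/(r+1)}$, with $c>0$ since it is the leading coefficient of a nonnegative density.

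For the upper edge I would instead analyse $V=-\log\bigl(Y_{\langle r\rangle}/L(r)\bigr)$ near $0$, since $x\nearrow L(r)$ corresponds to $V\searrow0$ with $L(r)-x\sim L(r)V$. Writing $V=-\log(U/L(r))+\sum_{j=1}^r(-\log B_j)$, each summand has a pure power-type density near the origin: the uniform term contributes $v^0$ and $-\log B_j$ contributes $v^{b_j-1}$. Convolving $n$ densities with near-origin exponents $\alpha_1,\dots,\alpha_n$ produces exponent $\sum_i\alpha_i+(n-1)$; here $\sum_j(b_j-1)=\frac12-r$ together with $(r+1)-1=r$ gives exponent $\frac12$, so the density of $V$ behaves like $v^{1/2}$ and the change of variables yields $F'_{\langle r\rangle}(x)\sim c'\sqrt{L(r)-x}$. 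The main obstacle in both edge computations is upgrading these leading-term heuristics to rigorous asymptotics: for $x\searrow0$ one must bound the shifted Mellin integral using the exponential decay of the Gamma ratios to show the residue dominates, and for $x\nearrow L(r)$ one must justify that the convolution localises at the origin and that every partial exponent stays above $-1$ (which holds since $\sum_{j\le m}b_j>0$). Both are routine but delicate; positivity of $c,c'$ then follows because they multiply the leading singular behaviour of a nonnegative density.
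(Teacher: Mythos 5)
Your argument is correct and starts from the same place as the paper's proof, namely the factorisation $Y_{\langle r\rangle}\overset{\rm d}{=}\Unif\left(0,L(r)\right)\prod_{j=1}^r\Beta\left(\frac{j}{r+1},\frac{j}{r(r+1)}\right)$ of Proposition~\ref{prop:G_Beta}(ii), but the machinery you deploy for the edge asymptotics is genuinely different. The paper writes $F'_{\langle r\rangle}$ as an explicit $r$-fold multiplicative convolution integral and extracts the two exponents directly, by isolating the singular part of the integrand near $x_1=x/L(r)$ for the hard edge and near $x_1=\cdots=x_r=1$ for the soft edge. You instead use Mellin inversion with a residue at the rightmost pole $s_0=1-\frac{1}{r+1}=\frac{r}{r+1}$ for $x\searrow0$ (the pole is indeed simple and uncontested by the other Gamma factors), and an additive convolution of the logarithms with the bookkeeping $\sum_j b_j=\tfrac12$ for $x\nearrow L(r)$; both reproduce the paper's exponents $x^{-r/(r+1)}$ and $\sqrt{L(r)-x}$. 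Your route buys more: the Mellin contour can be pushed further left to give a full expansion at the hard edge with explicit constants, and your positivity argument for the density on all of $(0,L(r))$ (convolution of densities positive on $(0,\infty)$ after taking logs) actually fills in a point the paper leaves implicit, since the support identity $\overline{\{F'_{\langle r\rangle}>0\}}=[0,L(r)]$ requires interior positivity and the paper only remarks that the density vanishes outside $[0,L(r)]$. Two small corrections. First, the Gamma ratios in your Mellin transform do \emph{not} decay exponentially on vertical lines: by Stirling the exponential factors cancel between numerator and denominator and the decay is only polynomial, of order $|t|^{-\sum_j b_j-1}=|t|^{-3/2}$; this is still absolutely integrable, so the inversion and the contour shift go through, but the justification should invoke this polynomial bound rather than exponential decay. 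Second, "positive because it multiplies the leading singular term of a nonnegative density" only yields $c\geq0$; to get $c>0$ you should note that the residue equals $\operatorname{Res}_{s=0}\Gamma(s)=1$ times manifestly positive Gamma factors evaluated at positive arguments (and similarly $c'$ is a product of Beta-function constants).
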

The density vanishes as a square root at the  `soft edge' $x=L(r)$. At the `hard edge' $x=0$ the density has an integrable singularity $x^{-\frac{r}{r+1}}$. For $r=1$ (Wishart matrices) this is the classical $x^{-\frac{1}{2}}$ divergence, but for $r>1$ the singularity is stronger. 

\begin{rem}
The generalised Catalan numbers 
\be
C^{\langle r\rangle }_k=\frac{r}{k+1}{rk+k \choose k}
\ee 
are related to the most known \emph{Fuss-Catalan} sequences 
\be
FC_{r,k}=\frac{1}{rk+1}{rk+k \choose k}.
\ee
  The Fuss-Catalan sequences also appear in random matrix theory and free probability as limiting moments of \emph{products} of independent Wishart matrices~\cite{Alexeev10,Liu11,Mlotkowski10}.
As shown by Penson and \.Zyczkowski~\cite{Penson11}, the Fuss-Catalan numbers $FC_{r,k}$ are the moment sequence of a distribution $F_{FC_r}(x)$ whose density is a Meijer G-function, with support on the interval $[0,L(r)]$.
\end{rem}
\par

\section{Proofs}
\label{sec:proof}
Theorem~\ref{thm:main} was discovered by a combination of experimenting and guessing, and the proof is based on matrix moments calculations combined with the exact solution for the enumeration of  $r$-plane trees (Proposition~\ref{prop:Gu}).  
\begin{proof}[{Proof of Theorem~\ref{thm:main}}] Let 
\be
m_{k,N}:=\int_{\R}x^k dF_N(x),\quad\text{and}\quad m_k
=\frac{1}{k+1}{(r+1)k \choose k}.
\ee
\begin{claim}
\label{cl:1}
Each $F_N$ has moments of all order.
\end{claim}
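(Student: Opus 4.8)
The plan is to read off the claim directly from the definition of $F_N$. For each fixed $N$ the empirical distribution~\eref{eq:defF} is, by construction, the (random) probability measure
\be
F_N=\frac{1}{\ell_N}\sum_{j=1}^{\ell_N}\delta_{x_j^{(N)}},
\ee
supported on the finite set of eigenvalues of the $\ell_N\times\ell_N$ Hermitian matrix $W_N$. A finitely supported measure has moments of every order, and the $k$-th moment is the normalised trace
\be
m_{k,N}=\int_\R x^k\,dF_N(x)=\frac{1}{\ell_N}\sum_{j=1}^{\ell_N}\bigl(x_j^{(N)}\bigr)^k=\frac{1}{\ell_N}\Tr W_N^k .
\ee
So the proof amounts to checking that these finitely many eigenvalues are finite numbers.

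First I would note that each entry $X_{ij}$, being a complex random variable, is almost surely finite; intersecting the finitely many full-probability events on which the entries of $X_N$ are finite, we obtain a full-probability event on which $X_N$ has finite entries. On this event $W_N=\tfrac1N X_N X_N^*$ is a genuine finite-dimensional Hermitian matrix, its eigenvalues $x_1^{(N)}\le\cdots\le x_{\ell_N}^{(N)}$ are finite real numbers, and the sum defining $m_{k,N}$ is a finite sum of finite terms for every $k\ge0$. This proves that, with probability one, $F_N$ has moments of all orders, and it simultaneously records the identity $m_{k,N}=\ell_N^{-1}\Tr W_N^k$ on which the subsequent moment computation rests.

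The only point requiring care is conceptual rather than technical: Claim~\ref{cl:1} must be read as a pathwise (almost sure) statement about the random measure $F_N$, and kept distinct from the assertion that the \emph{expected} moments $\E m_{k,N}$ are finite. The latter is in fact false under the present hypotheses, since the $X_{ij}$ are only assumed to have a second moment, whereas expanding $\E\Tr W_N^k$ produces factors $\E|X_{ij}|^{m}$ with $m>2$ that may diverge (already $\E m_{2,N}$ involves $\E|X_{ij}|^4$). The pathwise finiteness asserted here, by contrast, is furnished for free by the finite-dimensionality of $W_N$, and its role is organisational: it guarantees that the moment sequence $(m_{k,N})_{k\ge0}$ is well defined before one studies its large-$N$ limit. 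I therefore anticipate no genuine obstacle in this step.
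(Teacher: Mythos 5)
Your argument is exactly the paper's: the authors dispose of Claim~\ref{cl:1} in one line by noting that $F_N$ is a random measure with finite support, hence has moments of all orders, which is precisely the finite-sum observation you make. Your additional remark distinguishing pathwise finiteness of $m_{k,N}$ from finiteness of $\E m_{k,N}$ is a correct and sensible caveat (the paper indeed defers the issue of higher moments of the entries to the truncation step), but the core proof is the same.
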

\begin{claim}
\label{cl:2}
For all $k$, the moment sequence $m_{k,N}$ converges to $m_k$, almost surely.
\end{claim}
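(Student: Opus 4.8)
The plan is to prove Claim~\ref{cl:2} by the moment method, splitting the almost-sure statement into a mean computation and a variance estimate. With $\ell_N=Nr$ and $m_{k,N}=\frac{1}{\ell_N N^{k}}\Tr\big(X_N X_N^*\big)^{k}$, I would first expand
\[
\Tr\big(X_N X_N^*\big)^{k}=\sum_{\substack{a_1,\dots,a_k\\ c_1,\dots,c_k}}\ \prod_{m=1}^{k} X_{a_m c_m}\,\overline{X_{a_{m+1}c_m}},\qquad a_{k+1}:=a_1,
\]
where all indices run over $\{1,\dots,Nr\}$ and $X_{ac}=0$ unless $(a,c)\in N\staircase_r$, i.e.\ unless $\lceil a/N\rceil+\lceil c/N\rceil\le r+1$. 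Taking expectations, independence together with $\E X=0$ and $\E|X|^2=1$ forces every distinct index-pair $\{a,c\}$ to occur at least twice; viewing the pairs as edges of a connected graph on the distinct indices, the Euler bound $V\le E+1$ with $E\le k$ shows that the maximal order $N^{k+1}$ is attained exactly by the non-crossing pair partitions, all other terms being $O(N^{k})$.

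The combinatorial crux is to count these leading terms. Each non-crossing pairing is encoded by a rooted plane tree with $k$ edges whose $k+1$ vertices alternate by depth-parity between ``row'' vertices (the distinct values of the $a$'s) and ``column'' vertices (those of the $c$'s). The fine index of each vertex ranges freely inside a block of size $N$, producing the factor $N^{k+1}$, so what remains is to count the admissible block labels $s=\lceil a/N\rceil$ and $t=\lceil c/N\rceil$ in $\{1,\dots,r\}$. The only constraint is that each tree edge join labels with $s+t\le r+1$, which is precisely the edge condition defining an $r$-plane tree once $s,t$ are read as vertex colours; as this condition is symmetric, the row/column bipartition imposes nothing further. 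By Proposition~\ref{prop:Gu} the number of such coloured trees on $k+1$ vertices is $C^{\langle r\rangle}_k$, and therefore
\[
\E\,m_{k,N}=\frac{1}{Nr\,N^{k}}\Big(C^{\langle r\rangle}_k\,N^{k+1}+O(N^{k})\Big)\longrightarrow\frac{1}{r}\,C^{\langle r\rangle}_k=\frac{1}{k+1}\binom{(r+1)k}{k}=m_k .
\]

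To upgrade convergence in mean to almost-sure convergence I would bound the variance and use Borel--Cantelli. Expanding $\E\big(\Tr(X_N X_N^*)^k\big)^2$ over pairs of walks, the pairs sharing no entry factorise and cancel against $(\E\Tr)^2$. For the remaining entry-sharing pairs the crude Euler bound allows $2k+1$ vertices, but this maximum forces the combined graph to be a tree with every edge of multiplicity exactly two; since a closed walk crosses each bridge an even number of times, each such edge would then be traversed twice by a single walk, making the two walks edge-disjoint and contradicting entry-sharing. Hence every surviving configuration has at most $2k$ vertices, so $\operatorname{Var}(m_{k,N})=O(N^{2k})/(rN^{k+1})^2=O(N^{-2})$, which is summable; Chebyshev and Borel--Cantelli give $m_{k,N}-\E m_{k,N}\to0$ a.s., and combined with the mean computation $m_{k,N}\to m_k$ a.s. Intersecting the countably many null sets over $k$ yields the claim for all $k$ at once.

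The step I expect to be the main obstacle is the combinatorial identification: checking that the block support condition $\lceil a/N\rceil+\lceil c/N\rceil\le r+1$ translates \emph{exactly} into the $r$-plane-tree colouring rule $c(u)+c(v)\le r+1$, with the root an (arbitrarily coloured) row vertex, so that Proposition~\ref{prop:Gu} applies and delivers the normalisation $m_k=\frac{1}{r}C^{\langle r\rangle}_k$. A secondary but unavoidable technical point is that under the bare hypothesis $\E|X|^2=1$ the moments $\E\Tr(X_N X_N^*)^k$ need not be finite; I would dispose of this by the standard truncation $X_{ij}\mapsto X_{ij}\mathbf{1}_{|X_{ij}|\le B}$, recentred and rescaled, using rank and operator-norm perturbation inequalities to show that truncation perturbs each $m_{k,N}$ negligibly, after which all the expectations above are legitimate.
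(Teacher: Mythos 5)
Your proposal is correct and follows essentially the same route as the paper: expansion of $\E\Tr W_N^k$ over closed walks, reduction to doubled-edge trees via the Euler bound, identification of the block-label constraint $\lceil a/N\rceil+\lceil c/N\rceil\le r+1$ with the $r$-plane-tree colouring rule so that Proposition~\ref{prop:Gu} gives the count $C^{\langle r\rangle}_k$, an $\Or(N^{-2})$ variance bound with Chebyshev and Borel--Cantelli, and finally truncation to remove the boundedness assumption. The only cosmetic differences are that you spell out the bridge-parity argument for the variance bound where the paper defers to Bai--Silverstein, and that the paper carries out the truncation step via the L\'evy-distance inequalities rather than directly at the level of moments.
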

\begin{claim}
\label{cl:3}
The sequence $(m_k)_{k\geq0}$ uniquely determine the distribution $F_{\langle r\rangle}$.
\end{claim}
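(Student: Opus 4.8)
The plan is to show that the moment sequence $(m_k)_{k\ge0}$, with $m_k=\frac{1}{k+1}\binom{(r+1)k}{k}$, is \emph{determinate}: there is at most one probability distribution on $\R$ carrying these moments. Because the eigenvalues of $W_N=\frac1N X_NX_N^*$ are nonnegative, the relevant problem is the Stieltjes problem on $[0,\infty)$, and Hamburger-determinacy implies Stieltjes-determinacy; so it is enough to verify Carleman's condition $\sum_{k\ge1} m_{2k}^{-1/(2k)}=\infty$. The whole task thus collapses to an elementary geometric bound on the $m_k$.

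First I would bound the moments. Keeping only the middle term in the expansion $n^n=(j+(n-j))^n\ge\binom{n}{j}\,j^j(n-j)^{n-j}$ gives the elementary inequality $\binom{n}{j}\le n^n/\bigl(j^j(n-j)^{n-j}\bigr)$. Applying it with $n=(r+1)k$ and $j=k$ yields
\[
\binom{(r+1)k}{k}\le\frac{\bigl((r+1)k\bigr)^{(r+1)k}}{k^k\,(rk)^{rk}}
=\left(\frac{(r+1)^{r+1}}{r^r}\right)^{k}=L(r)^k ,
\]
so that $m_k\le L(r)^k$ for every $k$, with $L(r)$ exactly the constant appearing in Proposition~\ref{prop:G_Beta} and Corollary~\ref{cor:supp}.

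With this bound the conclusion is immediate: $m_{2k}^{-1/(2k)}\ge L(r)^{-1}>0$, hence $\sum_{k\ge1} m_{2k}^{-1/(2k)}\ge\sum_{k\ge1}L(r)^{-1}=\infty$, and Carleman's criterion gives determinacy, which is precisely Claim~\ref{cl:3}. A conceptually equivalent route, should one prefer to avoid Carleman, is to note that $m_k\le L(r)^k$ forces any solution $\mu$ of the moment problem to be supported in $[0,L(r)]$: for each $\epsilon>0$ one has $\mu\bigl(x>L(r)+\epsilon\bigr)\le m_{2k}/(L(r)+\epsilon)^{2k}\to0$ as $k\to\infty$. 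A compactly supported measure is determined by its moments, since polynomials are dense in $C\bigl([0,L(r)]\bigr)$ by the Weierstrass theorem, so two solutions integrate every continuous function identically and must coincide.

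I do not anticipate a serious obstacle here: the only quantitative input is the geometric estimate $m_k\le L(r)^k$, and everything else is a textbook application of Carleman's condition (or of Weierstrass approximation). The one point to get right is identifying the growth constant of $\binom{(r+1)k}{k}$ as $L(r)$, which the binomial inequality above pins down exactly rather than only asymptotically. Once Claim~\ref{cl:3} is established, it combines with Claims~\ref{cl:1} and~\ref{cl:2} through the method of moments (Fréchet–Shohat): determinacy of the limit together with almost-sure convergence of all empirical moments upgrades to the almost-sure weak convergence $F_N\Rightarrow F_{\langle r\rangle}$ asserted in Theorem~\ref{thm:main}.
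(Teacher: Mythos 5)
Your proof is correct, but it runs through a different determinacy criterion than the paper. The paper invokes Riesz's condition from Bai--Silverstein (Lemma B.2 there), namely $\liminf_k\frac{1}{k}m_{2k}^{1/(2k)}<\infty$, and verifies it with Stirling's formula; you instead verify Carleman's condition $\sum_k m_{2k}^{-1/(2k)}=\infty$ via the elementary bound $\binom{(r+1)k}{k}\le L(r)^k$ obtained by keeping one term of $\bigl(j+(n-j)\bigr)^n$. Both are one-line checks of a standard sufficient condition, so the logical skeleton is the same, but your route buys something extra: the inequality $m_k\le L(r)^k$ is an exact geometric bound rather than an asymptotic one, it immediately yields the self-contained fallback argument (any solution of the moment problem is supported in $[0,L(r)]$, hence determined by Weierstrass approximation) with no external lemma needed, and it identifies the constant $L(r)$ that reappears as the right edge of the support in Proposition~\ref{prop:G_Beta} and Corollary~\ref{cor:supp} --- a connection the paper's Stirling computation does not make visible at this point. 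One small point of care: Carleman's condition as you state it gives Hamburger determinacy directly, so the preliminary reduction to the Stieltjes problem is not actually needed; and in the Chebyshev step one should bound $\mu(|x|>L(r)+\epsilon)$ by $m_{2k}/(L(r)+\epsilon)^{2k}$ (using the even moments) unless nonnegativity of the support has already been granted.
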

By the  \emph{moment method}, the three claims imply Theorem~\ref{thm:main}.
Note that $F_N$ corresponds to a random measure with \emph{finite} support. Hence, Claim~\ref{cl:1} is immediate. Claim~\ref{cl:3} can be showed by checking Riesz's condition~\cite[Lemma B.2]{Bai2010}, 
\be
\liminf_k\frac{1}{k}m_{2k}^{\frac{1}{2k}}<\infty.
\label{eq:Riesz}
\ee
Indeed, Stirling's approximation formula implies
\be
\lim_{k\to\infty}\frac{1}{k}\left[\frac{1}{2k+1}{2(r+1)k \choose 2k}\right]^{\frac{1}{2k}}=0.
\ee
It remains to prove Claim~\ref{cl:2}:
\be
\lim_{N\to\infty}m_{k,N}=m_k,\quad \text{almost surely}.
\label{eq:cl2}
\ee
 Note that
\be
m_{k,N}=\int_{\R}x^kdF_N(x)=\frac{1}{rN}\Tr W_N^k.
\ee
We may first assume that the entries $X_{ij}$ are uniformly bounded.  Under this assumption 
we can prove the following two Lemmas.
\begin{lem}
\label{lem:exp} Let $\glN=N\staircase_r$. Then, for all $k\in\mathbb{N}$, 
\be
\lim_{N\to\infty}\E m_{k,N}=m_k,
\ee
\end{lem}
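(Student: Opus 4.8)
The plan is to evaluate $\lim_{N\to\infty}\E m_{k,N}$ by the moment method, turning the trace into a sum over closed walks and matching the leading term with the enumeration of $r$-plane trees in Proposition~\ref{prop:Gu}. Since $\E m_{k,N}=\frac{1}{rN}\E\Tr W_N^k=\frac{1}{rN^{k+1}}\E\Tr(X_N X_N^*)^k$, I would first expand
\be
\E\Tr(X_N X_N^*)^k=\sum_{i_1,\dots,i_k}\sum_{j_1,\dots,j_k}\E\left[\prod_{a=1}^k (X_N)_{i_a j_a}\,\overline{(X_N)_{i_{a+1}j_a}}\right],\qquad i_{k+1}:=i_1,
\ee
where $i_a,j_a$ run over $\{1,\dots,rN\}$. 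Writing each index through its block, $i_a=(\alpha_a-1)N+i_a'$ and $j_a=(\beta_a-1)N+j_a'$ with $\alpha_a,\beta_a\in\{1,\dots,r\}$ and $i_a',j_a'\in\{1,\dots,N\}$, the staircase shape $\staircase_r$ makes the $(a,b)$ block of $X_N$ nonzero precisely when $a+b\le r+1$; thus the factor $(X_N)_{i_a j_a}$ vanishes unless $\alpha_a+\beta_a\le r+1$, and likewise for $(X_N)_{i_{a+1}j_a}$.

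Because the $X_{ij}$ are i.i.d., centred, with $\E|X_{ij}|^2=1$ and (by the standing assumption) uniformly bounded, a nonzero expectation forces the $2k$ factors to pair up. Reading $i_1,j_1,i_2,\dots,j_k$ as a closed walk alternating between row- and column-vertices, the contributions of largest order in $N$ come from walks whose underlying graph is a tree traversed exactly twice along each edge, namely a \emph{plane tree} on $k+1$ vertices rooted at the starting index of the walk; each of its $k$ edges then carries a factor $\E|X_{ij}|^2=1$. The decisive point is that the block labels turn such a tree into a coloured tree $c\colon V\to\{1,\dots,r\}$, and the nonvanishing requirement $\alpha_a+\beta_a\le r+1$ across every edge is exactly the defining condition $c(u)+c(v)\le r+1$ of an \emph{$r$-plane tree}. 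Since this constraint involves only the block labels, the within-block indices $i_a',j_a'$ remain free to leading order and contribute a factor of $N$ for each of the $k+1$ vertices (distinctness costing only lower-order corrections). Summing over plane tree shapes and admissible colourings and invoking Proposition~\ref{prop:Gu} yields
\be
\E\Tr(X_N X_N^*)^k=N^{k+1}\,C^{\langle r\rangle}_k+o(N^{k+1}),\qquad C^{\langle r\rangle}_k=r\,m_k,
\ee
so that $\E m_{k,N}=\frac{1}{rN^{k+1}}\bigl(N^{k+1}r\,m_k+o(N^{k+1})\bigr)\to m_k$.

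The main obstacle is the error estimate concealed in $o(N^{k+1})$: I must show that every closed walk whose underlying graph fails to be a tree contributes at strictly lower order. This is the familiar vertex-versus-edge (Euler characteristic) count of the covariance-matrix moment method — a connected graph on $V$ vertices spanned by a $2k$-step closed walk using each edge at least twice has at most $k$ edges, hence $V\le k+1$ with equality only for trees, so both non-tree configurations and collisions of distinct tree vertices into a single index give $O(N^{k})$. Uniform boundedness of the entries bounds the moments $\E|X_{ij}|^{2m}$ and keeps the number of walk types finite for each fixed $k$; the block colouring merely multiplies these subleading counts by the bounded factor $r^{k+1}$ and leaves the power of $N$ untouched, so the classical estimates transfer verbatim.
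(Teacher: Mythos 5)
Your proposal is correct and follows essentially the same route as the paper: expand the trace as a sum over closed walks, use the block decomposition of indices to encode the staircase constraint $\alpha+\beta\le r+1$, identify the leading-order contributions with plane trees on $k+1$ vertices whose block labels satisfy the $r$-plane-tree colouring condition, and conclude via Proposition~\ref{prop:Gu} together with the standard vertex-versus-edge count for the subleading terms. The paper merely formalises the walk as a bipartite graph $G(i,j)$ with skeleton $G_1(i,j)$ and the constraint set as $\Lambda$, but the substance is identical.
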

\begin{lem}
\label{lem:var} 
 Let $\glN=N\staircase_r$. Then, for all $k\in\mathbb{N}$, 
\be
\E\left(m_{k,N}-\E m_{k,N}\right)^2=\Or\left(\frac{1}{N^2}\right).
\ee
\end{lem}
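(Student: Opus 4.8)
The plan is to expand $m_{k,N}=\frac{1}{rN\,N^{k}}\Tr(X_NX_N^*)^k$ as a sum over closed walks on the bipartite graph whose two vertex classes are the row indices $i\in\{1,\dots,rN\}$ and the column indices $j\in\{1,\dots,rN\}$. Each term of $\Tr(X_NX_N^*)^k$ is a product of $2k$ entries indexed by a closed walk $i_1\to j_1\to i_2\to j_2\to\cdots\to i_k\to j_k\to i_1$, in which every step contributes a factor $X_{i_aj_a}$ or $\overline{X_{i_{a+1}j_a}}$. Writing the variance as $\E(m_{k,N}-\E m_{k,N})^2=\frac{1}{(rN)^2N^{2k}}\sum_{(W_1,W_2)}\bigl(\E[W_1W_2]-\E[W_1]\,\E[W_2]\bigr)$, where the sum runs over ordered pairs of such walks, the first observation is that a pair contributes nothing unless $W_1$ and $W_2$ have a common undirected edge $\{i,j\}$: if they share no edge the two families of entries are independent, so $\E[W_1W_2]=\E[W_1]\E[W_2]$ exactly and the term cancels. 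Moreover, since $\E X_{ij}=0$, a term vanishes unless every distinct edge of the union multigraph $G=W_1\cup W_2$ is traversed at least twice in total, because an entry appearing only once factors out with zero mean.

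The heart of the argument is a vertex count for the surviving pairs. Let $G$ have $v$ distinct vertices, $e$ distinct edges and $c$ connected components. Because each contributing pair shares an edge and each $W_i$ is itself connected, $G$ is connected, so $c=1$ and the Euler inequality gives $v\le e+1$; the doubling constraint (each edge used $\ge2$ times among the $4k$ available slots) forces $e\le 2k$. I claim that in fact $v\le 2k$, i.e.\ that a contributing pair has at least two fewer vertices than the $2k+2$ available to the factorised leading term; this is exactly what upgrades the estimate from $\Or(1/N)$ to $\Or(1/N^2)$. The claim splits into two cases. If $G$ contains a cycle then $v\le e\le 2k$ directly. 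If $G$ is a tree, then every closed walk on a tree traverses each edge an even number of times, so the multiplicities of any fixed edge inside $W_1$ and inside $W_2$ are both even; for the shared edge both multiplicities are therefore $\ge2$, so this single edge already consumes $\ge4$ of the $4k$ slots while every other edge consumes $\ge2$, whence $e\le 2k-1$ and $v=e+1\le 2k$.

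With $v\le 2k$ in hand the estimate is routine. There are only finitely many combinatorial types of pairs $(W_1,W_2)$ for fixed $k$ and $r$, and for each type the number of admissible index assignments is $\Or(N^{v})=\Or(N^{2k})$, since each distinct row or column index ranges over $\{1,\dots,rN\}$ and the constraint $(i,j)\in N\staircase_r$ can only reduce this count. Under the standing assumption that the entries are uniformly bounded, every factor $\E[W_1W_2]-\E[W_1]\E[W_2]$ is $\Or(1)$ uniformly in $N$. Hence the unnormalised sum is $\Or(N^{2k})$, and dividing by $(rN)^2N^{2k}$ yields $\E(m_{k,N}-\E m_{k,N})^2=\Or(1/N^2)$. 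The one genuinely delicate point is the vertex bound $v\le 2k$: the parity property of closed walks on a tree is what guarantees that entangling the two walks costs two vertices rather than one, and getting this step right is the crux of the proof.
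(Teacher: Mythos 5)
Your proposal is correct and follows essentially the same route as the paper: expand the variance as a double sum over pairs of closed walks, discard pairs that share no edge or contain a singly-traversed edge, and win the extra factor $N^{-2}$ from the vertex bound $v\le 2k$ on the connected union graph. The paper simply delegates this last counting step to Bai--Silverstein (p.~50); you have supplied the details it omits, and in particular the tree-parity argument forcing the shared edge to carry multiplicity at least $4$ is exactly the standard crux and you state it correctly.
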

Lemma~\ref{lem:exp} states the convergence in expectation of the moments. Lemma~\ref{lem:var} implies, through Borel-Cantelli, the almost sure convergence~\eref{eq:cl2}.

We now show that the Theorem holds true under the sole hypothesis of finite second moment $\E|X_{ij}|^2=1$. This is done by the standard procedure of truncation. Fix a constant $C$ and consider the symmetric matrix $\hat{X}_N$ whose elements satisfy, for $1\leq,i,j\leq rN$
\be
\hat{X}_N(i,j)=\frac{X_{ij}1_{|X_{ij}|<C}-\E X_{ij}1_{|X_{ij}|<C}}{\sqrt{\E\left|X_{ij}1_{|X_{ij}|<C}-\E X_{ij}1_{|X_{ij}|<C}\right|^2}}.
\ee
The matrix $\hat{X}_N$ is a truncated and standardised version of $X_N$. Denote by $\hat{F}_N$ the empirical spectral distribution of $\hat{X}_N$.  
\par

Recall that~\cite[Appendix C]{Anderson10} the space of probability measures on $\R$ equipped with the weak topology is metrizable with the \emph{L\'evy distance}, defined for any pair of distribution functions as:
\be
\label{eq:Levy}
d_{\text{L\'evy}}(F,G):=\inf\{\epsilon>0\colon F(x-\epsilon)-\epsilon\leq G\leq F(x+\epsilon)+\epsilon, \text{ for all } x\in\R\}.
\ee
\par

For the reader's convenience, we state explicitly a bound that can be extracted
from the book of Bai and Silverstein~\cite[Equations (3.1.3)-(3.1.4)-(3.1.5) on p. 48]{Bai2010}.
For large enough $C>0$,
\begin{multline}
\limsup_N\left( d_{\text{L\'evy}}(\hat{F}_N,{F}_N)\right)^4\leq 4\E\left(\left|X_{ij}\right|^2 1_{\left| X_{ij}\right|>C}\right)\nonumber\\+2\left(1- \left(\E\left|X_{ij}1_{|X_{ij}|<C}-\E X_{ij}1_{|X_{ij}|<C}\right|^2\right)^2\right),
\end{multline}
almost surely.
The above inequality is an application of classic matrix inequalities and the strong law of large numbers. 
Note that the right-hand side of the inequality can be made arbitrarily small, by choosing $C$ large, uniformly in $i,j$. 
From the fact that weak convergence is equivalent to convergence with respect to the L\'evy metric, we conclude that $\hat{F}_N$ and ${F}_N$ have the same limit in distribution.  (See also~\cite[Theorem 2.1.21]{Anderson10}.) 
\end{proof}

It remains to prove the combinatorial lemmas. We follow the scheme of proof of~\cite[Theorem 3.7]{Bai2010}, and its variant by Cheliotis~\cite{Cheliotis18} (arXiv version).
\begin{proof}[Proof of Lemmas~\ref{lem:exp} and~\ref{lem:var}]
We start from the exact formula
\begin{gather}
\E m_{k,N}= \E\int_{\R}x^kdF_N(x)
=\frac{1}{rN}\E\Tr W_N^k \nonumber\\
=\frac{1}{rN^{k+1}}\sum_{\substack{i(1),\ldots,i(k)\\ j(1),\ldots,j(k)}=1}^{rN} \E (X_N)_{i(1)j(1)}\overline{ (X_N)_{i(2)j(1)}} \cdots  (X_N)_{i(k)j(k)}\overline{ (X_N)_{i(1)j(k)}}
\label{eq:exact_formula_proof12}
\end{gather}
Now, we group row and column indices within the same block as follows
\begin{align}
\label{eq:block_map}
b\colon [rN]&\longrightarrow[r]\\
i&\longmapsto  b(i)=\left\lceil \frac{i}{N}\right\rceil \nonumber.
\end{align}
(We use the standard notation $[m]:=\{1,\ldots,m\}$.) 
We introduce the following subset of multiindices
\be
\Lambda:=\left\{ (i,j)\in [rN]^k\times[rN]^k \colon 
\begin{array}{@{}c@{}}b(i(m))+b(j(m))\leq r+1\\ b(i({m+1}))+b(j(m))\leq r+1
\end{array} \text{ for all } m\in[k] \right\}.
\ee
If $(i,j)\notin\Lambda$, then 
the corresponding word in the summation~\eqref{eq:exact_formula_proof12} contains a letter 
identically zero.
\par
For two $k$-tuples $i,j\colon[k]\to[rN]$, we associate the bipartite graph $G(i,j)$ with vertex set 
$$
V(i,j)=\{(1,i(1)),(1,i(2))\ldots, (1,i(k)),(2,j(1)),(2,j(2))\ldots, (2,j(k))\}$$
(its cardinality is not necessarily $2k$ because of possible repetitions), and set of 
 edges 
$$E(i,j)=\{\left(2m-1,\{(1,i(m)),(2,j(m))\}\right),\left(2m,\{(2,j(m)),(1,i(m+1))\}\right)\colon m\in[k]\},$$
with the convention $i(k+1)\equiv  i(1)$.  
Two graphs $G(i,j)$ and $G(i',j')$ are said isomorphic if one becomes the other by renaming the vertices, that is if
$
i=\sigma\circ i'$, and $j=\tau\circ j'
$
for some permutations $\sigma,\tau\in S_{rN}$.

With this notation, we write 
\begin{gather}
\label{eq:exact_formula_proof2}
\frac{1}{rN}\E\Tr W_N^k \nonumber
=\frac{1}{rN^{k+1}}\sum_{i,j\colon[k]\to[rN]} \E X_{G(i,j)}\,1_{(i,j)\in\Lambda},
\end{gather}
where we encode the word 
\be\label{eq:exact_formula_proof3}
X_{G(i,j)}:=X_{i(1)j(1)}\overline{X_{i(2)j(1)}}X_{i(2)j(2)}\overline{X_{i(3)j(2)}}\cdots X_{i(k)j(k)}\overline{X_{i(1)j(k)}}
\ee 
in the bipartite graph $G(i,j)$. From $G(i,j)$ we generate its \emph{skeleton}  $G_1(i,j)$  by identifying edges with equal ends. Formally, $G_1(i, j)$ has vertex set $V (i, j)$, and edge set 
$$\{\{(1,i(m)),(2,j(m))\},\{(2,j(m)),(1,i(m+1))\}\colon m\in[k]\}.$$
Note that if $G(i,j)$ contains an edge that does not appear at least twice, than $\E X_{G(i,j)}=0$ because the $X_{ij}$'s are independent and centred. Thus we have at most $k$ edges in the skeleton $G_1(i,j)$ and so at most $k+1$ vertices. The number of isomorphic graphs $G_1(i,j)$ with $v\leq k+1$ vertices is $ (rN)^v(1+o(1))$. Thus the contribution to the
expectation of these terms is $\leq \frac{(rN)^v}{rN^{k+1}}(1+o(1))$ which vanishes as $N\to\infty$, unless $v=k+1$. In the latter case $G_1(i,j)$ is a tree.
\par

Therefore, the indices $(i,j)$  that contribute in the large-$N$ limit of~\eqref{eq:exact_formula_proof12} are those for which
\begin{enumerate}
\item $(i,j)\in\Lambda$;
\item the associated $G_1(i,j)$ has exactly $k+1$ vertices;
\item the closed path  
$$
(1,i(1))\to (2,j(1)) \to (1,i(2))\to (2,j(2))\to\cdots\to (1,i(k))\to (2,j(k))\to (1,i(1))
$$ 
traverses each edge of the tree exactly twice.
\end{enumerate}
In fact, such a pair $(i,j)$ defines a {\em plane tree}, that is, a tree on which we have specified an order among the children of each vertex. (Among two vertices with common parent, we declare smaller the one that appears first in the sequence $(i(1),j(1),i(2),j(2),\ldots,i(k),j(k))$, that is, the one that is visited first in the path.) So the graph $G_1(i,j)$ can be identified with a  plane tree $T$ with $k+1$ vertices.  
Since the  $X_{ij}$'s are independent and have unit variance, the corresponding word has expectation $\E X_{G(i,j)}=1$, if $(i,j)\in\Lambda$. 
Each label $m\in [rN]$ can be written as $m=(b-1)N+p$ for a unique choice of `block index' $b\in[r]$ and `inner index' $p\in[N]$. Therefore, for any  choice of block indices, there are $N^{k+1}[1+o(1)]$  ways to choose multiindices $i$ and $j$ corresponding to the same plane tree $T$ up to isomorphism. The condition $1_{(i,j)\in\Lambda}$ is the colouring condition on $T$ that makes it  a $r$-plane tree:
 \begin{gather}
\label{eq:exact_formula_proof4}
\frac{1}{rN}\E\Tr W_N^k \nonumber
=\frac{1}{rN^{k+1}}\sum_{\substack{\text{plane trees $T$}\\ \text{on $k+1$ vertices}} }\sum_{\substack{c\colon [k+1]\to [r]\\c(u)+c(v)\leq r+1\\ \text{for all edges  $(u,v)\in T$}}} N^{k+1}[1+o(1)] \\
=\frac{1}{r}\#\{\text{$r$-plane trees  on $k+1$ vertices}\}[1+o(1)]. 
\end{gather}
We now recall Proposition~\ref{prop:Gu}
\be
\#\{\text{$r$-plane trees  on $k+1$ vertices}\}=\frac{r}{k+1}{rk+k\choose k},
\ee
 and conclude the proof of Lemma~\ref{lem:exp}.
 \par
 
To prove Lemma~\ref{lem:var}, we write
\begin{gather}
\frac{1}{(rN)^2}\left[\E\left( \Tr W_N^k\right)^2-\left(\E \Tr W_N^k\right)^2\right]\nonumber\\
=
\frac{1}{(rN)^2}\frac{1}{N^{2k}}\sum_{i,j,i',j'\colon[k]\to[rN]} \left[\E X_{G(i,j)}X_{G(i',j')}-\E X_{G(i,j)}\E X_{G(i',j')}\right]\,1_{(i,j)\in\Lambda}1_{(i',j')\in\Lambda}\nonumber\\
\leq
\frac{1}{(rN)^2}\frac{1}{N^{2k}}\sum_{i,j,i',j'\colon[k]\to[rN]} \left[\E X_{G(i,j)}X_{G(i',j')}-\E X_{G(i,j)}\E X_{G(i',j')}\right].
\end{gather}
The latter can be shown to be $\Or((rN)^{-2})$, as detailed e.g. in~\cite[p. 50]{Bai2010}.
\end{proof}
\par

\begin{proof}[Proof of Proposition~\ref{prop:G_Beta}]
To prove i) we assume that $G_{\langle r\rangle}(z)$ can be expanded in a neighbourhood of $z=\infty$
\be
G_{\langle r\rangle}(z)=\int_{\R}\frac{1}{z-x}\rmd F_{\langle r\rangle}(x)=\sum_{k=0}^{\infty}\frac{1}{z^{k+1}}\int_{\R} x^k\rmd F_{\langle r\rangle}(x)
=\sum_{k=0}^{\infty}m_k\frac{1}{z^{k+1}}.
\ee
(It can be verified, a posteriori, that this expansion holds true for $|z|>L(r)$.) When computing the ratio of consecutive terms of the series we identify the claimed hypergeometric representation~\eqref{eq:Hypergeometric}.
\par

In order to prove ii), recall the moments of Beta and uniform random variables,
\be
\label{eq:mom_beta_unif}
\E\Beta(a,b)^k=\frac{\Gamma(a+b)\Gamma(a+k)}{\Gamma(a)\Gamma(a+b+k)}=\prod_{i=0}^{k-1}\frac{a+i}{a+b+i},\qquad 
\E\Unif(0,\ell)^k=\frac{\ell^{k}}{k+1}.
\ee
A little calculation shows that
\begin{gather}
m_k=\frac{[(r+1)k]!}{(rk)!(k+1)!}=\displaystyle\frac{\displaystyle\prod_{i=0}^{(r+1)k-1}\left((r+1)k-i\right)}{\displaystyle\prod_{i=0}^{rk-1}\left(rk-i\right)
\prod_{i=0}^{k}\left(k+1-i\right)}\nonumber \\
=\left(L(r)\right)^k
\displaystyle\frac{\displaystyle\prod_{i=0}^{(r+1)k-1}\left(k-\frac{i}{r+1}\right)}{\displaystyle\prod_{i=0}^{rk-1}\left(k-\frac{i}{r}\right)
\displaystyle\prod_{i=0}^{k}\left(k+1-i\right)}
=\frac{\left(L(r)\right)^k}{k+1}
\displaystyle\prod_{j=1}^{r}\prod_{i=0}^{k-1}\frac{\left(\frac{j}{r+1}+i\right)}{\left(\frac{j}{r}+i\right)}.
\end{gather}
Compare now with~\eref{eq:mom_beta_unif} to conclude the proof. 
\par

Finally, for point iii) we notice that
\begin{gather}
m_k=\frac{1}{k+1}{(r+1)k \choose k}
=\frac{1}{k+1}\times{\rm coefficient\, of}\, z^k\, {\rm in}\left(1+z\right)^{(r+1)k}\nonumber\\
=\frac{1}{k+1}\cdot\frac{1}{2\pi \rmi}\oint_{|z|=1}\frac{\left(1+z\right)^{(r+1)k}}{z^{k+1}}\rmd z.
\end{gather}
Now change coordinates $z\mapsto \exp({2\pi\rmi u})$ to get
\be
m_k=\int_{0}^1u^kdu\cdot\int_{0}^1\left[\exp(-2\pi\rmi u') \left(1+\exp(2\pi\rmi u') \right)^{r+1}\right]^kdu'.
\ee
\end{proof}

\begin{proof}[Proof of Corollary~\ref{cor:supp}]
By Proposition~\ref{prop:G_Beta}, the density $F'_{\langle r\rangle}$ is the Mellin (multiplicative) convolution of the densities of 
$\Unif\left(0,L(r)\right),\Beta\left(\frac{1}{r+1},\frac{1}{r(r+1)}\right),\ldots, \Beta\left(\frac{r}{r+1},\frac{r}{r(r+1)}\right)$:
\be
\label{eq:proof_cor1}
F'_{\langle r\rangle}(x)=C\int_{[0,1]^r} \chi_{[0,L(r)]}\left(\frac{x}{x_1}\right)
\prod_{j=1}^r\left(\frac{x_j}{x_{j+1}}\right)^{\frac{j}{r+1}-1}\left(1-\frac{x_j}{x_{j+1}}\right)^{\frac{j}{r(r+1)}-1}\frac{dx_j}{x_j}, 
\ee
where
\be
C= \frac{1}{L(r)}\prod_{j=1}^r\displaystyle\frac{\Gamma\left(\frac{j}{r}\right)}{\Gamma\left(\frac{j}{r+1}\right)\Gamma\left(\frac{j}{r(r+1)}\right)},
\ee
 $\chi_{[0,L(r)]}$ is the characteristic function of the interval $[0,L(r)]$, and $x_{r+1}\equiv1$. Since  the beta random variables are supported on $[0,1]$, we see immediately that $F'_{\langle r\rangle}(x)$ is zero for $x\notin [0,L(r)]$.  Assume that $0< x< L(r)$. Then, we can rewrite~\eqref{eq:proof_cor2} as
 \be
 \label{eq:proof_cor2}
F'_{\langle r\rangle}(x)=C\int_{x/L(r)}^1\frac{dx_1}{x_1}\int_{x_2}^1\frac{dx_2}{x_2}\cdots \int_{x_{r-1}}^1\frac{dx_r}{x_r}
\prod_{j=1}^r\left(\frac{x_j}{x_{j+1}}\right)^{\frac{j}{r+1}-1}\left(1-\frac{x_j}{x_{j+1}}\right)^{\frac{j}{r(r+1)}-1}.
\ee
For $x\searrow 0$, the leading contribution to the integral comes from the singular part of the integrand at $x=0$. Therefore, for $x\searrow0$ we have
  \be
 \label{eq:proof_cor3}
F'_{\langle r\rangle}(x)\sim C'\int_{x/L(r)}^1\frac{dx_1}{x_1}\int_{x_2}^1\frac{dx_2}{x_2}\cdots \int_{x_{r-1}}^1\frac{dx_r}{x_r}
\prod_{j=1}^r\left(\frac{x_j}{x_{j+1}}\right)^{\frac{j}{r+1}-1}= c x^{-\frac{r}{r+1}}.
\ee
For $x\nearrow L(r)$, the leading contribution to the integral comes instead from the vicinity of $x=L(r)$, picking the singular part of the integrand for $x_1=\cdots=x_r=1$. Therefore, for $x\nearrow L(r)$ we have
  \be
 \label{eq:proof_cor4}
F'_{\langle r\rangle}(x)\sim C''
\int_{x/L(r)}^1dx_1\int_{x_2}^1dx_2\cdots \int_{x_{r-1}}^1dx_r
\prod_{j=1}^r\left(x_{j+1}-x_j\right)^{\frac{j}{r(r+1)}-1}= c' \sqrt{L(r)-x}.
\ee
\end{proof}
\par

\section{Outlook}

We conclude with some exercises and further food for thought.

\begin{enumerate}
\item Show that adding/removing a finite number of boxes independent of $N$ to the dilation of the staircase partition does not change the limiting spectral distribution $F_{\langle r\rangle}$. 
\item  Prove Theorem~\ref{thm:main} using the Stieltjes transform method. One should first identify the algebraic equation whose solution (decaying at infinity as $1/z$) is $G_{\langle r\rangle}(z)$.
\item  Investigate the microscopic statistics of the eigenvalues of block-shaped random matrices.
\item For $\lambda$-shaped random matrices one expects a relation between the limit shape $\lambda$ and the limiting spectral distribution. Find new examples, for which the limit of $(F_N)_{N\geq1}$ can be computed explicitly.
\end{enumerate}

\ack
The authors acknowledge the partial support by the Italian National Group of Mathematical Physics INdAM-GNFM. FDC acknowledges the support by Regione Puglia through the project `Research for Innovation' - UNIBA024, and by INFN through the project `QUANTUM'.  ML acknowledges the support by PNRR MUR project CN00000013-`Italian National Centre on HPC, Big Data and Quantum Computing'. 
 FDC thanks Neil O'Connell for fruitful discussions at an early stage of this work.

 \section*{References}

\end{document}